\documentclass{article}
\usepackage{oldlfont}
\usepackage{enumerate}
\usepackage{amsmath}
\usepackage{amssymb}
\usepackage{amsthm}
\usepackage{mathrsfs}
\usepackage{amscd}
\usepackage{exscale}
\usepackage{latexsym}
\usepackage[all]{xy}
\usepackage{hyperref}
\usepackage{fancyhdr}
\usepackage{layout}
\usepackage{color}
\usepackage{graphicx}
\usepackage{mathtools}
\usepackage{amsfonts}
\usepackage{calc}
\usepackage{tikz}
\usepackage{mathtools}
\usepackage[OT2,T1]{fontenc}
\usepackage[margin=1in]{geometry}

\DeclareMathAlphabet{\mathpzc}{OT1}{pzc}{m}{it}
\DeclareSymbolFont{cyrletters}{OT2}{wncyr}{m}{n}
\DeclareMathSymbol{\Sha}{\mathalpha}{cyrletters}{"58}

\textheight=20cm \textwidth=13.5cm \hoffset=-1cm
\parindent=16pt
\setlength{\oddsidemargin}{0.85in}

\begin{document}

\baselineskip=17pt

\pagestyle{headings}

\numberwithin{equation}{section}

\makeatletter       

\def\@fnsymbol#1{\ensuremath{\ifcase#1\or a\or b\or \ddagger\or
   \mathsection\or \mathparagraph\or \|\or a\or \dagger\dagger
   \or \ddagger\ddagger \else\@ctrerr\fi}}

\makeatother

       \renewcommand{\thefootnote}{\fnsymbol{footnote}}                                             

\makeatletter  
\def\section{\@startsection {section}{1}{\z@}{-5.5ex plus -.5ex         
minus -.2ex}{1ex plus .2ex}{\large \bf}}                                 


\pagestyle{fancy}
\renewcommand{\sectionmark}[1]{\markboth{ #1}{ #1}}
\renewcommand{\subsectionmark}[1]{\markright{ #1}}
\fancyhf{} 
\fancyhead[LE,RO]{\slshape\thepage}
\fancyhead[LO]{\slshape\rightmark}
\fancyhead[RE]{\slshape\leftmark}

\addtolength{\headheight}{0.5pt} 
\renewcommand{\headrulewidth}{0pt} 

\newtheorem{thm}{Theorem}[section]
\newtheorem{mainthm}[thm]{Main Theorem}
\newtheorem*{T}{Theorem 1'}

\newcommand{\ZZ}{{\mathbb Z}}
\newcommand{\GG}{{\mathbb G}}
\newcommand{\Z}{{\mathbb Z}}
\newcommand{\RR}{{\mathbb R}}
\newcommand{\NN}{{\mathbb N}}
\newcommand{\GF}{{\rm GF}}
\newcommand{\QQ}{{\mathbb Q}}
\newcommand{\CC}{{\mathbb C}}
\newcommand{\FF}{{\mathbb F}}

\newtheorem{lem}[thm]{Lemma}
\newtheorem{cor}[thm]{Corollary}
\newtheorem{pro}[thm]{Proposition}
\newtheorem*{proposi}{Proposition \ref{pro:pro63}}
\newtheorem*{thm_notag}{Theorem}
\newtheorem{problem}{Problem}
\newtheorem*{prob1}{Problem 1'}
\newtheorem*{con}{Conjecture}

\newtheorem{proprieta}[thm]{Property}
\newcommand{\pf}{\noindent \textbf{Proof.} \ }
\newcommand{\eop}{${\Box}$  \relax}
\newtheorem{num}{equation}{}

\theoremstyle{definition}
\newtheorem{rem}[thm]{Remark}
\newtheorem{rems}[thm]{Remarks}
\newtheorem{D}[thm]{Definition}
\newtheorem{Not}{Notation}

\newtheorem{Def}{Definition}

\newcommand{\nsplit}{\cdot}
\newcommand{\GGG}{{\mathfrak g}}
\newcommand{\GL}{{\rm GL}}
\newcommand{\SL}{{\rm SL}}
\newcommand{\SP}{{\rm Sp}}
\newcommand{\LL}{{\rm L}}
\newcommand{\Ker}{{\rm Ker}}
\newcommand{\la}{\langle}
\newcommand{\ra}{\rangle}
\newcommand{\PSp}{{\rm PSp}}
\newcommand{\Uni}{{\rm U}}
\newcommand{\GU}{{\rm GU}}
\newcommand{\GO}{{\rm GO}}
\newcommand{\Aut}{{\rm Aut}}
\newcommand{\Alt}{{\rm Alt}}
\newcommand{\Sym}{{\rm Sym}}
\renewcommand{\char}{{\rm char}}

\newcommand{\isom}{{\cong}}
\newcommand{\z}{{\zeta}}
\newcommand{\Gal}{{\rm Gal}}
\newcommand{\SO}{{\rm SO}}
\newcommand{\SU}{{\rm SU}}
\newcommand{\PGL}{{\rm PGL}}
\newcommand{\PSL}{{\rm PSL}}
\newcommand{\loc}{{\rm loc}}
\newcommand{\Sp}{{\rm Sp}}
\newcommand{\PUni}{{\rm PU}}
\newcommand{\Id}{{\rm Id}}
\newcommand{\s}{{\sigma}}
\newcommand{\al}{{\alpha}}
\newcommand{\be}{{\beta}}
\newcommand{\ga}{{\gamma}}
\newcommand{\de}{{\delta}}

\newcommand{\F}{{\mathbb F}}
\renewcommand{\O}{{\cal O}}
\newcommand{\Q}{{\mathbb Q}}
\newcommand{\R}{{\mathbb R}}
\newcommand{\N}{{\mathbb N}}
\newcommand{\E}{{\mathcal{E}}}
\newcommand{\G}{{\mathcal{G}}}
\newcommand{\A}{{\mathcal{A}}}
\newcommand{\C}{{\mathcal{C}}}
\newcommand{\modn}{{\textrm{mod} \hspace{0.1cm} }}
\newcommand{\bmu}{{\textbf \mu}}
\newcommand{\hloc}{{\rm loc}}
\newcommand{\f}{{\mathfrak f}}
\newcommand{\g}{{\mathfrak g}}
\newcommand{\h}{{\mathfrak h}}
\newcommand{\cost}{{\mathfrak c}}

\newcommand\ddfrac[2]{\frac{\displaystyle #1}{\displaystyle #2}}

\vskip 0.5cm

\title{On the division fields of an elliptic curve and \\ an effective bound to the hypotheses of the \\ local-global divisibility}
\author{Roberto Dvornicich\footnote{University of Pisa, Department of Mathematics, Pisa, Italy, e-mail address: roberto.dvornicich@unipi.it},
Laura Paladino\footnote{Corresponding author; ORCID ID: 0000-0003-4758-9775; 
University of calabria, Department of Mathematics and Computer Science, Rende, Italy, e-mail address: laura.paladino@unical.it}}
\date{  }
\maketitle

\vskip 1.5cm

\begin{abstract}
We investigate some aspects of the
$m$-division field $K(\E[m])$, where $\E$ is an elliptic curve defined over a field $K$ with $\char(K)\neq 2,3$
and $m$ is a positive integer.
When $m=p^r$,
with $p\geq 5$ a prime and $r$ a positive integer,  we prove $K(\E[p^r])=K(x_1,\z_p,y_2)$,
where $\{(x_1, y_1),(x_2,y_2)\}$ is a generating system of $\E[p^r]$ and $\z_p$ is a primitive $p$-th root of the unity.
If $\E$ has a $K$-rational point of order $p$, then $K(\E[p^r])=K(\z_{p^r},\sqrt[m_1]{a})$, with
$a\in  K(\z_{p^r})$ and $m_1|p^r$. 
In addition, when $K$ is a number field, we produce an upper bound  to the logarithmic height of the discriminant
of the extension $K(\E[m])/K$, for all $m\geq 3$.
As a consequence, we give an explicit effective version of the hypotheses of the local-global divisibility problem
in elliptic curves over number fields. 
\end{abstract}

\textbf{MSC}: 11G05, 11G07

\section{Introduction} \label{sec0}

\par Let $\E$ be an elliptic curve defined over a field $K$ with $\textrm{char}(K)\neq 2,3$. 
Let $m$ be a positive integer. By $K(\E[m])/K$ we denote the $m$-th division field, i.e. the field generated over
$K$ by the coordinates of the $m$-torsion points of $\E$.
Since the first serious study of the arithmetic of elliptic curves, the $m$-th division fields
have played a vital r\^ole. Investigating the Galois representations on the total Tate module is the same as studying these fields. 
Iwasawa theory, modularity and even the proof of the Mordell-Weil theorem are related to the properties of $K(\E[m])/K$. 
Let $P_1=(x_1,y_1)$ and $P_2=(x_2,y_2)$ be two $m$-torsion points of $\E$, generating $\E[m]$. Then
clearly $K(\E[m])=K(x_1,x_2,y_1,y_2)$. It is well-known that as a consequence of the Weil pairing we have $K(\z_m)\subseteq K(\E[m])$,
where $\z_m$ is a primitive $m$-th root of the unity. Recently there has been a growing interest in studying the possible cases
when $K(\z_m)=K(\E[m])$ \cite{MS}, \cite{Reb}, \cite{Pal2}, \cite{LRGJ} and more generally in producing
new set of generators for $K(\E[m])/K$ involving $\z_m$ itself as a generator \cite{BP0}, \cite{BP}. 
 In \cite{BP}, in particular it was proved that,  for every odd integer $m\geq 5$, we have $K(\E[m])=K(x_1,\z_{m},y_2)$. When $m=p^r$, for a prime $p\geq 5$ and a positive integer $r\geq 2$, we improve such a generating system, by showing $K(\E[p^r])=K(x_1,\z_p,y_2)$, for every $r\geq 1$. 
Even if we  substitute $\z_{p^r}$ with $\z_p$, we still have that all the $p^r$-th roots of the unity are
contained in $K(\E[p^r])$.  We also show 
that, for every $p^r$-torsion
point $(x_1,y_1)$ of $\E$, the extension $K(\E[p^r])/K(x_1,y_1)$ is  metacyclic. Let $C_m$ denote the cyclic group of order $m$. If $F=K(x_1,y_1)$, then  
$K(\E[p^r])/F=F(\z_{p^r},\sqrt[m_1]{a}),$ where $a\in F(\z_{p^r})$, $m_1|p^r$
and $\Gal(K(\E[p^r])/F)=C_{m_1}.C_{m_2}$, with $m_2$ dividing $p^{r-1}(p-1)$.
Observe that this shows that if $\E$ has a $K(\z_{p^r})$-rational point of exact order $p^r$
(and in particular if $\E$ has a $K$-rational point of exact order $p^r$), then   
$K(\E[p^r])=K(\z_{p^r},\sqrt[m_1]{a})$. We also give some remarks on the possible istances when
$\QQ(\E[p])=\QQ(\z_{p},\sqrt[p]{a})$, with $a\in \QQ(\z_p)$. 

\bigskip In the second part of the paper, we concentrate in the case when $K$ is a number field and $\E: y^2=x^3+bx+c$, with $b,c\in K$. For all $m\geq 3$ we produce an upper bound $B(m,b,c)$, depending on $m$, $b$ and $c$, to the height of the discriminant of the extension $K(\E[m])/K$.  As a consequence we give an explicit effective version to the hypotheses of the following local-global question, known as local-global divisibility problem.  

\par\bigskip\noindent  \begin{problem} \label{prob1} Let $P\in {\mathcal{E}}(K)$. Assume that for all but finitely many places $v\in K$, there exists $D_v\in {\mathcal{E}}(K_v)$ such that $P=mD_v$, where $K_v$ is the completion of $K$ at the place $v$. 
Is it possible to conclude that there exists $D\in {\mathcal{E}}(K)$ such that $P=mD$?
\end{problem}

\noindent This problem was originally stated by Dvornicich and Zannier in 2001, 
in the more general setting of a commutative algebraic group
defined over $K$  \cite{DZ}. Many papers have been written about this question 
since its formulation. A solution to the problem for all powers of prime numbers implies a solution for every $m$.
The question has been completely answered for elliptic curves over $\QQ$. In particular the
local-global divisibility holds for all $m$ but the ones divided by the powers $p^n$, with
$p\in \{2,3\}$ and $n\geq 2$ (see \cite{DZ}, \cite{PRV2}, \cite{Cre} for further details). Moreover
the question has been answered for all but finitely many primes $p$ in elliptic curves over a general number field $K$ \cite{PRV}.
Concerning the formulation of the problem in the setting of commutative algebraic groups defined over
number fields, a classical anwer for algebraic tori of dimension 1 is given
by  the Grunwald-Wang Theorem and there are some partial answers in the case of algebraic tori 
of dimension $d>1$ \cite{Ill}, in the
case of abelian varieties  \cite{GR2},  \cite{GR3} and even in the general case 
\cite{Pal_17}. In addition the problem has been connected
with a classical question posed by Cassels
on the divisibility of elements of the Tate-Shafarevic group
$\Sha(K,\E)$ in the Weil-Ch\^atelet group $H^1(\Gal(\overline{K}/K,\E(K))$, where
$\overline{K}$ denotes the algebraic closure of $K$  \cite{Cre2}, \cite{CS}. In particular
the mentioned results produced in \cite{PRV} and \cite{PRV2} assure an affirmative answer to
Cassels' question for all but finitely many primes $p$ over $K$ (for all $p\geq 5$ when $K=\QQ$).

\bigskip  

In this paper we deal with the hypotheses of Problem \ref{prob1}. They are
not minimal,  in fact it suffices to assume that the local divisibility holds for a \emph{finite} number of places instead of all but finitely many places (see also \cite{P_CM}).
 Let $G:=\Gal(K(\E[m])/K)$ with cardinality $|G|$.  We will denote by $v$ both a prime in $K$ and the associate place and
by $N_{K/\QQ}(v)$ its norm. 
By $h(\alpha)$ we will denote the logarithmic height of $\alpha\in \overline{K}$. 
We will prove that
the assumption of the validity of the local divisibility for all but finitely many places $v$ in the
statement of Problem \ref{prob1}, can be replaced by  the assumption of the validity of the local divisibility for all $v$ with $h(N_{K/\QQ}(v))\leq 12577\cdot B(m,b,c)$, but a finite number of them with density $\delta < \ddfrac{1}{|G|}$, where
$B(m,b,c)$ is an upper bound of the height of the discriminant of the extension $L/K$. If $K/\QQ$ is a Galois extension, the condition $h(N_{K/\QQ}(v))\leq 12577\cdot B(m,b,c)$
is equivalent to $h(v)\leq 12577 \ddfrac{B(m,b,c)}{[K:\QQ]}$ and if $K=\QQ$, this is nothing but
$\log v\leq  12577\cdot B(m,b,c)$
(see Section \ref{sec1}). In particular, if $K=\QQ$ and $m$
is not divisible by any  power $p^r$,
with $p\in \{2,3\}$ and $r\geq 2$, then the validity of the local divisibility for all nonarchimedean
$v$ such that $\log v\leq 12577\cdot B(m,b,c)$, but a finite number of them with density $\delta < \ddfrac{1}{|G|}$, 
implies the global one.  
Even for some integers $m$ divisible by powers $2^r$ or $3^r$, with $r\geq 2$, for which the local-global divisibility does not hold in general, there are still examples of elliptic curves defined over $\QQ$ for which this Hasse principle for divisibility holds 
as well (see \cite{DZ} or \cite{Pal} for examples for $m=4$).
So, regarding the bound $B(m,b,c)$,  more generally we have that, when $K=\QQ$, in all the cases when the Hasse principle for divisibility holds,  the validity of the local divisibility by all $v$ with $\log v\leq 12577 B(m,b,c)$, but a finite number of them with density $\delta < \ddfrac{1}{|G|}$, implies the global one. 
In the last section of the paper, for $m=4$, in a case when the Hasse principle for divisibility holds, we will produce an explicit example 
showing how to find points not satisfying those hypotheses that are locally divisible for infinitely many primes but not globally divisible.
We will call them pseudodivisible points.

\section{On the $p^r$-division field} \label{sec_5}

\par  We recall the following result given in \cite{BP} about the 
number fields $K(\E[m])$, for all odd integers $m\geq 5$. 

\begin{thm} \label{ordinates1} Let $m\geq 5$ be an odd integer and let $\E$ be an elliptic curve defined over a field $K$ with $char(K)\neq 2,3$.
Let $P_1=\{x_1,y_1\}$ and $P_2=\{x_2,y_2\}$ be two $m$-torsion points of $\E$, generating $\E[m]$.
Then  $K(\E[m])=K(x_1,\z_m,y_2)$.
\end{thm}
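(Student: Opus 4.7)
The plan is to show that $L := K(x_1,\z_m,y_2)$ coincides with $K(\E[m])$. The inclusion $L\subseteq K(\E[m])$ is automatic, since the Weil pairing already places $\z_m$ inside $K(\E[m])$. Setting $H:=\Gal(K(\E[m])/L)$, I will prove $H=\{1\}$ by pinning down the action of every $\sigma\in H$ on the basis $(P_1,P_2)$ and then eliminating each possibility.

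First I note that, because $\{P_1,P_2\}$ generates $\E[m]\cong(\Z/m\Z)^2$, it is in fact a $\Z/m\Z$-basis, so both $P_1$ and $P_2$ have exact order $m$. For $\sigma\in H$ the condition $\sigma(x_1)=x_1$ forces $\sigma(P_1)=\varepsilon P_1$ with $\varepsilon\in\{\pm 1\}$ (two points of $\E$ share their $x$-coordinate only if they are negatives of one another), and $\sigma(\z_m)=\z_m$ together with the Galois-equivariance of the Weil pairing forces $\det\sigma=1$. Writing $\sigma(P_2)=\alpha P_1+\beta P_2$, this gives $\beta=\varepsilon$. The remaining constraint $\sigma(y_2)=y_2$ reads: the point $\alpha P_1+\varepsilon P_2$ has $y$-coordinate equal to $y_2$.

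Next I would treat $\varepsilon=1$. The admissible values of $\alpha$ form a subgroup $T\le\Z/m\Z$ (these elements close under composition), and the map $\alpha\mapsto\alpha P_1+P_2$ injects $T$ into the set of points of $\E$ whose $y$-coordinate is $y_2$. The latter set has at most three elements, being in bijection with the roots of the cubic in $x$ obtained from the Weierstrass equation by fixing $y=y_2$. Hence $|T|\le 3$; oddness of $m$ rules out $|T|=2$, leaving $|T|\in\{1,3\}$. The crux of the proof is excluding $|T|=3$: in that case $T=\{0,c,-c\}$, and the three points $P_2$, $cP_1+P_2$, $-cP_1+P_2$ are collinear (they lie on the horizontal line $y=y_2$), so the chord-and-tangent group law forces their sum to be $O$, i.e.\ $3P_2=O$, which contradicts $\mathrm{ord}(P_2)=m\ge 5$. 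Thus $T=\{0\}$.

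Finally I would rule out $\varepsilon=-1$. Squaring such a $\sigma$ yields the $\varepsilon=1$ element with parameter $-2\alpha$, which by the previous paragraph must vanish in $\Z/m\Z$; the oddness of $m$ then forces $\alpha=0$. But then $\sigma(P_2)=-P_2$, whose $y$-coordinate is $-y_2$, so $\sigma(y_2)=y_2$ would entail $y_2=0$, making $P_2$ a $2$-torsion point and contradicting $\mathrm{ord}(P_2)=m$. Thus $H=\{1\}$ and $L=K(\E[m])$. The decisive step is the $|T|=3$ case: it is the chord-tangent identity for three collinear points, combined with both hypotheses "$m$ odd" and "$m\ge 5$", that closes the argument.
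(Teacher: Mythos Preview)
Your proof is correct. Note, however, that the paper does not itself prove this theorem: it is quoted from Bandini--Paladino (the reference labelled [BP]), so there is no in-paper argument to compare against directly.

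Your approach is nonetheless in the same spirit as the paper's proof of the subsequent refinement $K(\E[p^r])=K(x_1,\z_p,y_2)$. Both arguments set $H=\Gal(K(\E[m])/L)$, use that fixing $x_1$ and $\z_m$ forces every $\sigma\in H$ to have matrix shape $\left(\begin{smallmatrix}\varepsilon&\alpha\\0&\varepsilon\end{smallmatrix}\right)$ with $\varepsilon=\pm1$, and then exploit the fact that $y_2$ determines $x_2$ only up to the cubic $x^3+bx+c-y_2^2=0$, so that at most three values of $\alpha$ can occur. Where the paper dispatches the order-$3$ possibility by explicit matrix congruences (computing $\sigma^2$ and $\sigma^3$ and solving the resulting system), you instead observe that if $|T|=3$ then the three points $P_2,\,cP_1+P_2,\,-cP_1+P_2$ lie on the horizontal line $y=y_2$ and hence sum to $O$ by the chord--tangent law, forcing $3P_2=O$. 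This geometric shortcut is a little cleaner and makes the roles of both hypotheses ($m$ odd, $m\ge 5$) completely transparent.
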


\noindent In particular, when $m=p^r$, with $p>3$ and $r\geq 1$  we have

$$K(\E[p^r])=K(x_1,\z_{p^r},y_2).$$

\noindent We are going to show that, for all $r\geq 2$, we can replace $\z_{p^r}$ with $\z_p$ in this generating system.

\begin{thm} \label{ordinates2} Let $\E$ be an elliptic curve defined over a field $K$ with $char(K)\neq 2,3$.
Let $p>3$ be a prime number and let $r\geq 1$. If $P_1=\{x_1,y_1\}$ and $P_2=\{x_2,y_2\}$ are two $m$-torsion points of $\E$, generating $\E[m]$, then

$$K(\E[p^r])=K(x_1,\z_p,y_2).$$ 
\end{thm}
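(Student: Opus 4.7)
The plan is to use Theorem~\ref{ordinates1} as a springboard and reduce the statement to showing that $\z_{p^r}\in L$, where $L:=K(x_1,\z_p,y_2)$. Theorem~\ref{ordinates1} already gives $K(\E[p^r])=K(x_1,\z_{p^r},y_2)=L(\z_{p^r})$, so the Galois group $G:=\Gal(K(\E[p^r])/L)$ embeds into $\Gal(K(\z_{p^r})/K(\z_p))$ and therefore has order dividing $p^{r-1}$. The task becomes proving that $G$ is trivial.

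Take $\sigma\in G$ and write its action on $\E[p^r]$ in the basis $(P_1,P_2)$ as a matrix $\bigl(\begin{smallmatrix} a & b \\ c & d\end{smallmatrix}\bigr)$ with entries in $\ZZ/p^r\ZZ$. The condition $\sigma(x_1)=x_1$ forces $\sigma(P_1)=\pm P_1$, so $c\equiv 0$ and $a\equiv\pm 1\pmod{p^r}$. Since $|G|$ divides the odd integer $p^{r-1}$, every $\sigma\in G$ has odd order, and this immediately rules out $a=-1$ (otherwise $(-1)^{|\sigma|}P_1=-P_1\neq P_1$, contradicting $\sigma^{|\sigma|}=\Id$). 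Hence $a=1$. Combining the Weil pairing with $\sigma(\z_p)=\z_p$ then gives $\det(\sigma)=d\equiv 1\pmod p$.

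For the last generator, $\sigma(y_2)=y_2$ translates into $y(\sigma(P_2))=y_2$. Writing the model as $\E:y^2=x^3+Ax+B$, the set of $P\in\E(\overline{K})$ with $y(P)=y_2$ is cut out by the cubic $X^3+AX+B-y_2^2=0$ and therefore has cardinality at most $3$. Because $\sigma$ fixes $y_2$, the orbit $\{\sigma^k(P_2):k\geq 0\}$ lies inside this set and so has at most $3$ elements; on the other hand this cardinality divides $|\sigma|$, which divides $p^{r-1}$. Being a power of $p\geq 5$, it must equal $1$. Consequently the orbit is a singleton, $\sigma(P_2)=P_2$, and this yields $b\equiv 0$ and $d\equiv 1\pmod{p^r}$. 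Combined with $a=1$ and $c=0$ this forces $\sigma=\Id$, so $G$ is trivial and $K(\E[p^r])=L$, as required.

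The main obstacle to watch is the orbit-counting step in the last paragraph: one must be sure that the invariance of the single scalar $y_2$ really is the ingredient that pins down $P_2$, and that the hypothesis $p\geq 5$ is used precisely to ensure that a nontrivial $p$-power cannot fit inside a set of at most three points. Everything else amounts to routine bookkeeping with the mod-$p^r$ Galois representation and with the Weil pairing.
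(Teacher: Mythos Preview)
Your argument is correct, and it is actually tidier than the paper's own proof. The paper does not invoke Theorem~\ref{ordinates1} to bound $|G|$; instead it first proves the intermediate equality $K(\E[p^r])=K(x_1,x_2,\z_p,y_2)$, then observes that $x_2$ satisfies a cubic over $K(x_1,\z_p,y_2)$ so that $[K(\E[p^r]):K(x_1,\z_p,y_2)]\leq 3$, and finally carries out an explicit case analysis on $\sigma^2=\Id$ versus $\sigma^3=\Id$ by squaring and cubing the matrix $\bigl(\begin{smallmatrix}\pm1 & \alpha\\0 & \pm1+kp\end{smallmatrix}\bigr)$ and solving the resulting congruences, where the hypotheses $p\neq 2$ and $p\neq 3$ enter separately in the two cases. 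Your route replaces all of this computation by a single structural observation: since $K(\E[p^r])=L(\z_{p^r})$ by Theorem~\ref{ordinates1}, the group $G$ has $p$-power order, which both forces $a=1$ (eliminating the sign ambiguity without any matrix manipulation) and makes the orbit of $P_2$ inside the at-most-three-element fibre $\{y=y_2\}$ automatically a singleton because $p\geq 5$. The payoff is a shorter proof in which the role of the hypothesis $p>3$ is completely transparent; the paper's version, by contrast, is self-contained in the sense that it does not lean on Theorem~\ref{ordinates1} and instead re-derives what it needs by hand.
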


\begin{proof}
We first prove that $K[p^r]=K(x_1,x_2,\z_p,y_2)$.
Let $\sigma\in \Gal(K[p^r]/K)$ fixing $x_1$,  $x_2$, $\z_p$ and $y_2$.
Since $\sigma$ fixes $x_1$, $x_2$ and $y_2$, then $\sigma$ has the form

$$\left(\begin{array}{cc} \pm 1 & 0 \\ 0 & 1 \end{array}\right).$$

\noindent We recall that $\sigma(\z_{p^r})=\z_{p^r}^{\det(\sigma)}$ and, since $\z_p=\z_{p^r}^{p^{r-1}}$,
consequently $\sigma(\z_{p})=\z_{p}^{\det(\sigma)}$.  If $\sigma$ fixes $\z_p$, then $\det(\sigma)\equiv 1 \pmod  p$. We deduce

$$\sigma=\left(\begin{array}{cc}  1 & 0 \\ 0 &  1 \end{array}\right)$$

\noindent and $K[p^r]=K(x_1,\z_p,x_2,y_2)$. 
Now let $\sigma\in \Gal(K[p^r]/K)$ fixing $x_1$, $\z_p$ and $y_2$. To prove  $K[p^r]=K(x_1,\z_p,y_2)$,
it suffices to show that $\sigma$ is the identity. If $\sigma$ fixes $x_1$, then $\sigma$ has the form

$$\left(\begin{array}{cc} \pm 1 & \alpha \\ 0 & \beta \end{array}\right).$$

\noindent As above,  if $\sigma$ fixes $\z_p$, then $\det(\sigma)\equiv 1 \pmod{p}$. We deduce

$$\sigma=\left(\begin{array}{cc} \pm 1 & \alpha \\ 0 & \pm 1 +kp \end{array}\right),$$

\noindent with $0\leq k \leq p^{r-1}-1$. Moreover $\sigma$ fixes $y_2$. Then the polynomial $x_2^3+bx_2+c-y_2^2=0$ has degree at most
3 over $K$ and $[K[p^r]:K(x_1,\z_p,y_2)]\leq 3$. This implies that either $\sigma^2$ fixes $x_2$
or $\sigma^3$ fixes $x_2$. If  $\sigma^2(x_2)=x_2$, then 

$$\sigma^2\equiv\left(\begin{array}{cc}  1 & 0 \\ 0 & 1 \end{array}\right) \pmod {p^r},$$

\noindent i.e. 

$$\left(\begin{array}{cc}  1 & \pm 2\alpha +\alpha k p\\ 0 & 1\pm 2kp + k^2p^2 \end{array}\right)\equiv\left(\begin{array}{cc} 1 & 0 \\ 0 & 1 \end{array}\right) \pmod{p^r} .$$

\noindent Thus

$$\left\{ \begin{array}{lll}  
 \pm2\alpha +\alpha k p&\equiv 0 & \pmod {p^r} \\
 1\pm 2kp + k^2p^2&\equiv 1 & \pmod{p^r} 
\end{array}\right.$$

$$\left\{ \begin{array}{lll}  
 \alpha (\pm 2+k p)&\equiv 0 &\pmod {p^r} \\
 kp (\pm 2 + kp)&\equiv 0 &\pmod {p^r} 
\end{array}\right.$$

\noindent Since $p\neq 2$, then $\pm 2+kp\not\equiv 0 \pmod {p^r} $, for every $0\leq k \leq p^{r-1}-1$. Therefore $\alpha\equiv 0 \pmod{p^r}$ and $kp \equiv 0 \pmod{p^r}$, implying $\sigma=\pm\Id$. Anyway $\sigma=-\Id$ is not possible, since $\sigma$ fixes $y_2$.
Therefore $\sigma=\Id$.

\bigskip\noindent If $\sigma^3(x_2)=x_2$, then

$$\sigma^3\equiv \left(\begin{array}{cc}  1 & 0 \\ 0 & 1 \end{array}\right) \pmod {p^r},$$

\noindent i.e.

\begin{equation} \label{matrix} \left(\begin{array}{cc} 
\pm 1 & 3\alpha \pm 3\alpha k p+\alpha k^2 p^2\\
 0 & \pm 1 +3 k p\pm 3k^2p^2+k^3p^3\end{array}\right)
\equiv 
\left(\begin{array}{cc} 1 & 0 \\ 
0 & 1 \end{array}\right) \pmod {p^r}.
\end{equation}

\noindent Immediately we deduce 

$$\sigma=\left(\begin{array}{cc}  1 & \alpha \\ 0 &  \pm 1 +kp \end{array}\right).$$

\noindent Since $\det(\sigma)=\pm 1 +kp \equiv 1  \pmod p$, then 

$$\sigma=\left(\begin{array}{cc}  1 & \alpha \\ 0 &  1 +kp \end{array}\right).$$

\noindent Moreover, from \eqref{matrix}, we have

$$\left\{ \begin{array}{ll}  
 3\alpha + 3\alpha k p+\alpha k^2 p^2 & \equiv 0 \pmod {p^r} \\
 1 +3 k p+ 3k^2p^2+k^3p^3 & \equiv 1 \pmod {p^r} 
\end{array}\right.$$

$$\left\{ \begin{array}{ll}  
 \alpha (3+3k p+k^2p^2) & \equiv 0  \pmod {p^r} \\
 kp (3+3k p+k^2p^2) & \equiv 0 \pmod {p^r}.
\end{array}\right.$$

\noindent Owing to $3+3k p+k^2p^2\not\equiv 0 \pmod {p^r}$, for all $0\leq k \leq p^{r-1}-1$ (recall that $p> 3$), then $\alpha\equiv 0 \pmod{p^r}$ and $kp \equiv 0 \pmod {p^r}$, implying
$\sigma=\Id$ and $K(\E[p^r])=K(x_1,\z_p,y_2).$
\end{proof}

\noindent We keep the notation introduced in Section \ref{sec0} and set $F:=K(x_1,y_1)$.  As
usual we denote by $\FF_p$ the field with $p$ elements. 

\begin{thm}\label{deg_y_2_2}
For all $p> 3$ and $r\geq 1$, 
\begin{description}

\item[1.] the degree $[K_{p^r}:K(x_1,\z_{p^r})]$ divides $2p^r$ and the Galois group $\Gal(K_{p^r}/K(x_1,\z_{p^r}))$ is cyclic, generated
by a power of $\eta=\left(\begin{array}{cc} -1 & \hspace{0.3cm} 1 \\  \hspace{0.3cm} 0 & -1 \end{array}\right)\,$.

\item[2.] The extension  $K(\E[p^r])/F$ is metacyclic. In particular $K(\E[p^r])/F=F(\z_{p^r},\sqrt[m_1]{a})$,
with $a\in F(\z_{p^r})$ and  $\Gal(K(\E[p^r])/F)=C_{m_1}. C_{m_2}$,
where $m_1$, $m_2$ are positive integers such that $m_1|p^r$ and $m_2|p^{r-1}(p-1)$.
The group  $C_{m_1}$ is generated by a power of $\omega=\left(\begin{array}{cc} 1 &  1 \\   0 & 1 \end{array}\right)\,$.

\item[3.] If $\E$ has a $K(p^r)$-rational point of order $p^r$, then $K(\E[p^r])=K(\z_{p^r},\sqrt[m_1]{a})$,
with $a\in K(\z_{p^r})$ and $m_1|p^r$, and the extension $K(\E[p^r])/K$ is metacyclic of order dividing
$p^{2r-1}(p-1)$.

\end{description}

\end{thm}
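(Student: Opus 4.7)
My strategy is to analyze the three statements through matrix representations of Galois acting on the basis $\{P_1,P_2\}$ of $\E[p^r]$. For (1), any $\sigma\in\Gal(K_{p^r}/K(x_1,\z_{p^r}))$ satisfies $\sigma(P_1)=\pm P_1$, so its matrix takes the form $\begin{pmatrix}\pm 1 & \alpha\\ 0 & \beta\end{pmatrix}$; fixing $\z_{p^r}$ then forces $\det\sigma\equiv 1\pmod{p^r}$, i.e.\ $\beta=\pm 1$ with the sign matching the top-left entry. The set of all such matrices is the group $G_0=\{\pm\omega^\alpha : \alpha\in\ZZ/p^r\ZZ\}$ of order $2p^r$. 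Writing $\eta=-I+N$ with $N=\begin{pmatrix}0&1\\0&0\end{pmatrix}$ nilpotent and $-I$ central, the elementary identity $\eta^k=(-1)^kI+k(-1)^{k-1}N$ shows, using $\gcd(2,p^r)=1$, that $\eta$ has order exactly $2p^r$ and that $\eta^{2\ell}$ ranges over $\langle\omega\rangle$ while $\eta^{2\ell+1}$ ranges over its coset, so $G_0=\langle\eta\rangle$ is cyclic. Since every subgroup of a finite cyclic group is cyclic and generated by a power of the generator, $\Gal(K_{p^r}/K(x_1,\z_{p^r}))$ is cyclic and generated by a power of $\eta$.

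For (2) I would study the tower $F\subseteq F(\z_{p^r})\subseteq K(\E[p^r])$. Any $\sigma\in H:=\Gal(K(\E[p^r])/F(\z_{p^r}))$ fixes $P_1=(x_1,y_1)$, so its first column is $(1,0)^{\mathrm{t}}$; fixing $\z_{p^r}$ then forces $\beta=1$, hence $\sigma=\omega^\alpha$. Thus $H$ is a subgroup of the cyclic group $\langle\omega\rangle$ of order $p^r$, so $|H|=m_1$ divides $p^r$ and $H$ is generated by a power of $\omega$. The quotient $\Gal(K(\E[p^r])/F)/H\simeq\Gal(F(\z_{p^r})/F)$ embeds into $(\ZZ/p^r\ZZ)^\times$ through the determinant character, so its order $m_2$ divides $p^{r-1}(p-1)$. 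Because $m_1\mid p^r$ and $\z_{p^r}\in F(\z_{p^r})$, an $m_1$-th root of unity is available inside $F(\z_{p^r})$, and since $\char(K)\neq p$, Kummer theory produces some $a\in F(\z_{p^r})$ with $K(\E[p^r])=F(\z_{p^r},\sqrt[m_1]{a})$; the resulting cyclic-by-cyclic structure $C_{m_1}.C_{m_2}$ is the required metacyclic description.

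For (3), if $\E$ admits a $K(\z_{p^r})$-rational point of exact order $p^r$ I would choose the basis with $P_1$ equal to this point. Then $x_1,y_1\in K(\z_{p^r})$, so $F\subseteq K(\z_{p^r})$ and hence $F(\z_{p^r})=K(\z_{p^r})$; substituting into (2) gives $K(\E[p^r])=K(\z_{p^r},\sqrt[m_1]{a})$ with $a\in K(\z_{p^r})$ and $m_1\mid p^r$. The tower $K\subseteq K(\z_{p^r})\subseteq K(\E[p^r])$ has cyclic steps — the lower one because $\Gal(K(\z_{p^r})/K)$ embeds in the cyclic group $(\ZZ/p^r\ZZ)^\times$, the upper one by Kummer — so $K(\E[p^r])/K$ is metacyclic of degree at most $m_1\cdot[K(\z_{p^r}):K]\leq p^r\cdot p^{r-1}(p-1)=p^{2r-1}(p-1)$. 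The main delicate points I anticipate are the explicit verification that $\eta$ (respectively $\omega$) exhausts the full cyclic group $G_0$ (respectively $\langle\omega\rangle$), where oddness of $p$ is crucial through the invertibility of $2$ modulo $p^r$, and the Kummer-theoretic step, which relies on having the $m_1$-th roots of unity already inside $F(\z_{p^r})$; both are bookkeeping in nature but must be done with care.
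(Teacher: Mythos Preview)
Your proposal is correct and follows essentially the same route as the paper: identify the matrices fixing $x_1$ (resp.\ $x_1,y_1$) and $\z_{p^r}$ as the group $\{\pm\omega^\alpha\}$ (resp.\ $\{\omega^\alpha\}$), verify by the explicit computation of $\eta^k$ that this group is cyclic generated by $\eta$, and then invoke Kummer theory over $F(\z_{p^r})$ for the radical description. Your write-up is in fact slightly more careful than the paper's in two places---you justify the Kummer step by noting that the needed $m_1$-th roots of unity lie in $F(\z_{p^r})$, and you give the closed formula $\eta^k=(-1)^kI+k(-1)^{k-1}N$ rather than just listing the even and odd cases---but these are expository refinements of the same argument.
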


\begin{proof} { }

\begin{description}
\item[1.] By Theorem \ref{ordinates1}, we have $K_{p^r}=K(x_1,\z_{p^r},y_2)$. Let $\s\in \Gal(K(\E[p^r])/K)$ fixing $x_1$ and
$\z_{p^r}$. Then
$\s(P_1)=\pm P_1$ and $\det(\s)=1$, i. e. 
$$\s=\left(\begin{array}{cc} \pm 1 & \alpha \\   0 & \pm 1 \end{array}\right),$$

\noindent for some $0\leqslant \alpha \leqslant p^r-1$. The powers of $\eta$ are
\[ \eta^n = \left\{ \begin{array}{ll} \left(\begin{array}{cc} 1 & -n \\ 0 & \hspace{0.3cm} 1 \end{array}\right) & {\rm if}\ n\ {\rm is\ even}\\
\ & \ \\
\left(\begin{array}{cc} -1 &   \hspace{0.3cm} n \\  \hspace{0.3cm} 0 & -1 \end{array}\right) & {\rm if}\ n\ {\rm is\ odd} \end{array} \right. \]
and its order is  $2p^r$. Clearly every power of $\s$ is a power of $\eta$ too. So
the Galois group $\Gal(K_{p^r}/K(x_1,\z_{p^r}))$ is cyclic of order dividing $2p^r$ and it is generated by a power of $\eta$.

\item[2.]
By Theorem \ref{ordinates1}, we have $K_{p^r}=K(x_1,\z_{p^r},y_2)$. In particular
$K_{p^r}=K(x_1,y_1,\z_{p^r},y_2)$. Let $\s\in \Gal(K(\E[p^r]/K)$ fixing $x_1$, $y_1$ and
$\z_{p^r}$. Then $\s(P_1)= P_1$ and $\det(\s)= 1$, i. e. 
$$\s=\left(\begin{array}{cc}  1 & \alpha \\   0 &  1  \end{array}\right),$$

\noindent for some $0\leqslant \alpha \leqslant p^r-1$. 
Thus $\Gal(K(\E[p^r])/F(\z_{p^r})$ is cyclic of order $m_1$ dividing $p^r$ and it is generated by a power of
$\omega$. Therefore
$K(\E[p^r])/F=F(\z_{p^r},\sqrt[m_1]{a})$, with $a\in F(\z_{p^r})$. 
The extension  $F(\z_{p^r})/F$ is cyclic of order dividing $p^{r-1}(p-1)$ (recall that $p\neq 2$). Therefore
$\Gal(K(\E[p^r])/F)=C_{m_1}. C_{m_2}$,
with $m_2| p^{r-1}(p-1)$.

\item[3.] This is a direct consequence of \textbf{2.}
\end{description}
\end{proof}

\noindent 
Notice that in part \textbf{3.} of Theorem \ref{deg_y_2_2} we could have $\sqrt[m_1]{a}\in K(\z_{p^r})$
and $K(\E[p^r])=K(\z_{p^r})$. Of course an interesting case is when
when $ \QQ(\E[p])=\QQ(\z_p,\sqrt[p]{a})$. The istances when $\QQ(\E[p])=\QQ(\z_p)$ have been investigated
in \cite{MS}, \cite{Reb}, \cite{Pal} and \cite{LRGJ}. We are going to give some remarks on 
elliptic curves without complex multiplication (CM in the following) 
defined over $\QQ$ such that $\QQ(\E[p])\subsetneq \QQ(\E[p])=\QQ(\z_p,\sqrt[p]{a})$.

\begin{pro}  Let $\E$ be a non CM elliptic curve defined over $\QQ$.
If $p\in \{2,3,5,7\}$, then there exists an elliptic curve $\E/\QQ$ such that
$\QQ(\E[p])/\QQ=\QQ(\z_{p},\sqrt[p]{a})$, with $a\in \QQ(\z_{p^r})\setminus (\QQ(\z_p))^p$.
If $p\geq 17$, $p\neq 37$, then $\QQ(\E[p])\neq \QQ(\z_{p},\sqrt[p]{a})$, for all $a\in \QQ(\z_{p})$.
\end{pro}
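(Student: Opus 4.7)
The plan is to reformulate the condition $\QQ(\E[p])=\QQ(\z_p,\sqrt[p]{a})$, with $a\in\QQ(\z_p)\setminus(\QQ(\z_p))^p$, as a statement about the mod-$p$ Galois representation $\bar\rho_p\colon G_\QQ\to\GL_2(\FF_p)$. Such an equality gives $|\Gal(\QQ(\E[p])/\QQ)|=p(p-1)$ and makes $\Gal(\QQ(\E[p])/\QQ(\z_p))$ cyclic of order $p$, hence a $p$-Sylow of $\SL_2(\FF_p)$ (i.e.\ unipotent). Its normalizer in $\GL_2(\FF_p)$ is a Borel subgroup, so $\bar\rho_p$ is reducible with diagonal characters $\chi_1,\chi_2$ satisfying $\chi_1\chi_2=\chi_p$ and both trivial on $G_{\QQ(\z_p)}$; equivalently $\chi_1,\chi_2$ are powers of $\chi_p$. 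Conversely, when the image of $\bar\rho_p$ has exactly this shape, Theorem~\ref{deg_y_2_2}(3) together with Kummer theory gives back $\QQ(\E[p])=\QQ(\z_p,\sqrt[p]{a})$.

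For the existence half, Mazur's theorem on torsion over $\QQ$ restricts to $p\in\{2,3,5,7\}$ the primes for which a non CM curve $\E/\QQ$ can carry a $\QQ$-rational point of exact order $p$. For each such $p$ the modular curve $X_1(p)$ has genus $0$ with infinitely many rational points, and a generic choice yields a non CM $\E/\QQ$ whose mod-$p$ image has the maximal order $p(p-1)$ in the relevant Borel. Concrete specimens are $\E\colon y^2=x(x^2+x-1)$ for $p=2$ and the elliptic curve $X_1(11)\colon y^2+y=x^3-x^2$ for $p=5$, with analogous standard models for $p=3,7$. Theorem~\ref{deg_y_2_2}(3) then gives $\QQ(\E[p])=\QQ(\z_p,\sqrt[m_1]{a})$ with $m_1\mid p$, and the full order $p(p-1)$ of the Galois group forces $m_1=p$ and $a\notin(\QQ(\z_p))^p$.

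For the non-existence half, suppose $\QQ(\E[p])=\QQ(\z_p,\sqrt[p]{a})$. By the first paragraph $\bar\rho_p$ is reducible, so $\E$ admits a $\QQ$-rational cyclic subgroup of order $p$. Mazur's 1978 isogeny theorem then restricts $p$ to $\{2,3,5,7,11,13,17,19,37,43,67,163\}$, and the non CM hypothesis eliminates the primes $19,43,67,163$ coming only from CM curves, leaving $\{2,3,5,7,11,13,17,37\}$. For $p\geq 19$ with $p\neq 37$ this is already incompatible, settling the bulk of the argument. The residual prime $p=17$ requires a finer analysis: up to quadratic twist and $17$-isogeny there are exactly two non CM curves over $\QQ$ admitting a rational $17$-isogeny, corresponding to the two non-cuspidal rational points of the genus $1$ modular curve $X_0(17)$, and for these one checks directly that the isogeny characters $\chi_1,\chi_2$ are not simultaneously powers of $\chi_{17}$; this can be done either by computing Frobenius traces at a few small primes or by observing that their conductors are divisible by primes $\ell\neq 17$, forcing $\chi_1$ to ramify away from $17$. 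Finally, the degenerate subcase $a\in(\QQ(\z_p))^p$, which is $\QQ(\E[p])=\QQ(\z_p)$, is excluded in the range $p\geq 17$ for non CM $\E/\QQ$ by the results cited in \cite{MS}, \cite{Reb}, \cite{Pal} and \cite{LRGJ}.

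The main obstacle is the isolated prime $p=17$, which Mazur's isogeny theorem by itself does not eliminate: one must either handle the two exceptional non CM curves explicitly or carry out a careful ramification analysis on the isogeny characters. All other primes $p\geq 19$, $p\neq 37$ are dispatched immediately from Mazur's classification.
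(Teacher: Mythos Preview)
Your approach parallels the paper's: both reduce the non-existence half to showing that for large $p$ the image of $\bar\rho_p$ cannot be a Borel of order $p(p-1)$. The paper cites a compiled classification result (for non-CM $\E/\QQ$ and $p>17$, $p\neq 37$, the image is either all of $\GL_2(\FF_p)$ or lies in the normalizer of a non-split Cartan), whereas you invoke Mazur's isogeny theorem directly and then discard the CM-only primes $19,43,67,163$; these are equivalent packagings of the same input, yours being slightly more self-contained. Your route has the merit of attempting $p=17$ explicitly, which the paper's own argument in fact does not reach (its terminal set $\Omega$ still contains $17$, and the discussion following the proof treats that case only conjecturally, citing Sutherland's tables). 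One caution on your $p=17$ step: the reduction ``up to quadratic twist'' is not harmless, since twisting alters $\chi_1$ and $\chi_2$ individually; the twist-invariant quantity to test is the ratio $\chi_1/\chi_2$, and verifying that this single character is not a power of $\chi_{17}$ for the unique $17$-isogeny class is the clean way to close the argument. For the existence half, the paper proceeds by exhibiting explicit curves for each $p\in\{2,3,5,7\}$ (and for $p=7$ invokes Gonz\'alez-Jim\'enez--Lozano-Robledo to exclude the degenerate case $\QQ(\E[7])=\QQ(\z_7)$), while your genericity argument on $X_1(p)$ is more conceptual but leaves the verification that the image really attains order $p(p-1)$ implicit; both are acceptable at the level of a sketch.
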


\begin{proof}
If $\QQ(\E[p])=\QQ(\z_{p},\sqrt[p]{a})$, with $a\in \QQ(\z_{p})\setminus (\QQ(\z_p))^p$,  then $|\Gal(\QQ(\E[p])/\QQ)|=p(p-1)$. By the classification of the maximal subgroup of $\GL_2(\FF_p)$ 
(see \cite{Ser}, in particular Proposition 15), we have that $p$ divides $|\Gal(\QQ(\E[p])/\QQ)|$ if and only if either $\Gal(\QQ(\E[p])/\QQ)$
is contained in a Borel subgroup, but it not contained in a split Cartan subgroup or
$\Gal(\QQ(\E[p])/\QQ)$ contains $\SL_2(\FF_p)$. If $p>17$ and $p\neq 37$, then the image of the representation

$$\rho_{\E,p}: \Gal(\QQ(\E[p])/\QQ)\longrightarrow \GL_2(\FF_p)$$

\noindent is either  $\GL_2(\FF_p)$ or it is contained in the normalizer of a non-split Cartan subgroup
\cite{Ser81}, \cite{BP}. Then  $\QQ(\E[p])\neq \QQ(\z_{p},\sqrt[p]{a})$. 
Therefore $\QQ(\E[p])= \QQ(\z_{p},\sqrt[p]{a})$ implies $p\in \Omega=\{2,3,5,7,11,13,17,37\}$. This proves the second
part of the proposition. We are going to show explicit examples of curves $\E$ defined over $\QQ$ and without CM,
such that $\QQ(\E[p])= \QQ(\z_{p},\sqrt[p]{a})$, with $a\in \QQ(\z_{p})\setminus (\QQ(\z_p))^p$, for all $p\leq 7$.
If $p=2$, it suffices to take the curve $$\E: y^2=(x-\sqrt{\alpha})(x+\sqrt{\alpha})(x-\beta),$$

\noindent with $\alpha,\beta\in \QQ$ and $\alpha$ not a rational square. In this case
 $\QQ(\E[2])=\QQ(\sqrt{\alpha})$. For $p=3$
consider the family of elliptic curves ${\mathcal{F}}_{b,a_0}$ in \cite[Theorem 4.1, part 3.]{BP0}, with
$a_0=2k^2$, $k\in \QQ$. For every curve $\E_{b,2k^2}$ of

$${\mathcal{F}}_{b,2k^2}: y^2=x^3+bx+\ddfrac{16b^2-864k^2b-3888k^4}{576k^2},$$

\noindent with $b,k\in \QQ$, we have  $\QQ(\E_{b,2k^2}[3])=\QQ\left(\z_3,\sqrt[3]{\frac{k^2b}{3}+4k^4}\right)$. 
So it suffices to take ${\mathcal{F}}_{b,2k^2}$, with $k\in \QQ$ such that $\ddfrac{k^2b}{3}+4k^4$ is not a cube.
By part \textbf{3.} of Theorem \ref{deg_y_2_2}, if $\E$ has a $\QQ(\z_p)$-rational point of exact order $p$, then 
$\QQ(\E[p])=\QQ(\z_p,\sqrt[n]{a})$, with $a\in \QQ(\z_p)$.
Since by Mazur's Theorem  \cite{Maz}, there exist elliptic curves with a rational torsion point of exact order $5$ or $7$, then
for those curves we have $\QQ(\E[5])=\QQ(\z_5,\sqrt[5]{a})$, with $a\in \QQ(\z_5)$ and, respectively  $\QQ(\E[7])=\QQ(\z_7,\sqrt[7]{a})$, with $a\in \QQ(\z_7)$.  For such curves there are two possibilities: either $\QQ(\E[p])=\QQ(\z_p)$ or $\QQ(\z_{p})\subsetneq \QQ(\E[p])=\QQ(\z_p,\sqrt[p]{a})$, $p\in \{5,7\}$. 
In \cite{LRGJ} Gonz\'ales-Jim\'enes and Lozano-Robledo proved that if $\QQ(\E[p])=\QQ(\z_{p})$, then
$p\leq 5$ (see also \cite{MS}, \cite{Reb}). Therefore for every elliptic curve with a rational torsion point of order $7$, 
we have $\QQ(\E[7])=\QQ(\z_7,\sqrt[7]{a}),$ with $a\in \QQ(\z_7)\setminus \QQ(\z_7)^7$. Examples of elliptic curves in
short Weierstrass form with a rational torsion point of order 7 are

$$\E: y^2=x^3-3483x+121014,$$

$$\E: y^2=x^3-1323x+6395814.$$

\noindent For $p=5$, examples of elliptic curves with a rational point of order $5$ such that $\QQ(\z_5)\subsetneq\QQ(\E[5])= \QQ(\z_5,
\sqrt[5]{a})$, with $a\in \QQ(\z_5)$ are the curves

$$\E: y^2=x^3-432x+8208,$$

$$\E: y^2=x^3-27x+55350.$$

\end{proof}

\noindent
There are no known examples of non CM elliptic curve defined over $\QQ$ 
such that $\QQ(\E[p])=\QQ(\z_p,\sqrt[n]{a})$, for $p\in \{11,13,17,37\}$.
The curve 121a1 in Cremona label is a non CM elliptic curve defined over
$\QQ(\z_{11})$ with 
a torsion point of exact order 11 defined over $K=\QQ(\z_{11}+\z_{11}^{-1})$. For this curve we have
$\QQ(\E[11])=\QQ(\z_{11},\sqrt[11]{a})$, for some $a\in \QQ(\z_{11})$; anyway  121a1 is defined over
$\QQ(\z_{11})$ and not over $\QQ$, as stated above. The curve 121b1 in Cremona label is  defined over $\QQ$
and it has a torsion point of exact order 11 defined over $K=\QQ(\z_{11}+\z_{11}^{-1})$, but it has CM too.
In principle we can have non CM elliptic curves
defined over $\QQ$ with $\QQ(\E[p])=\QQ(\z_p,\sqrt[n]{a})$, for $p\in \{11,13,17,37\}$. 
In fact, in principle we can have a non CM elliptic curve with a point of order
11 defined on a subfield of $\QQ(\z_{11})$, since we can have a non CM elliptic curve with a point of order
11 defined on a field of degree 5 over  $\QQ$ \cite{quartic}.
Similarly a point of exact order 13 of a non CM elliptic curve can be defined on a number field of degree
3 or 4 or 6 or 12 over $\QQ$ \cite{quartic} and a point of exact order 17 (resp. 37) of a non CM elliptic curve can be defined on a number field of degree 8 or 16 (resp. 12 or 36 over $\QQ$) \cite{quartic}. 
Anyway, regarding the cases when $p\in \{13,17,37\}$,
all the known Galois representations of $\Gal(\QQ(\E[p])/\QQ)$ in $\GL_2(\FF_p)$, that are 
not surjective 
are subgroups of $\GL_2(\FF_p)$ of order $> (p-1)p$ (then in particular $\QQ(\E[p])\neq \QQ(\z_p,\sqrt[n]{a})$ in all those examples)
\cite{Sut}. Therefore we can conjecture that if  $\QQ(\E[p])=\QQ(\z_{p},\sqrt[p]{a})$, with $a\in \QQ(\z_{p})\setminus (\QQ(\z_p))^p$, then $p\in S =\{2,3,5,7,11\}$. It is also our guess that $S$ can be shrunk to $\{2,3,5,7\}$.

\bigskip\noindent For completeness we bound the degree  $[K(\E[p^r]):K(x_1,\z_p)]$ and describe the Galois group
 $\Gal(K(\E[p^r])/K(x_1,\z_p))$. 

\begin{pro}\label{deg_y_2} Let $p\geq 5$ and $r\geq 1$. Then the degree $[K(\E[p^r]):K(x_1,\z_p)]$ divides $2p^{2r-1}$
and the extension $K(x_1,\z_p,y_2)/K(x_1,\z_p)$ is a metacyclic
extension with Galois group $C_{m_3}. C_{m_4}$,
where $m_3$, $m_4$ are positive integers such that $m_3|2p^r$ and $m_4|p^{r-1}$.
\end{pro}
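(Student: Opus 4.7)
The plan is to combine Theorem \ref{ordinates2} with part \textbf{1.} of Theorem \ref{deg_y_2_2} via the tower of intermediate fields
$$K(x_1,\z_p) \;\subseteq\; K(x_1,\z_{p^r}) \;\subseteq\; K(\E[p^r]).$$
By Theorem \ref{ordinates2} the top of this tower is exactly the field $K(x_1,\z_p,y_2)$ whose Galois-theoretic structure over $K(x_1,\z_p)$ we need to describe, so the statement reduces to analyzing the two consecutive extensions and gluing their Galois groups together.

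For the bottom step, the extension $K(x_1,\z_{p^r})/K(x_1,\z_p)$ is generated by adjoining the root of unity $\z_{p^r}$, so it is Galois and its Galois group injects into $\Gal(K(\z_{p^r})/K(\z_p))$, which in turn injects into the cyclic group $\Gal(\QQ(\z_{p^r})/\QQ(\z_p))$ of order $p^{r-1}$. Hence $\Gal(K(x_1,\z_{p^r})/K(x_1,\z_p))$ is cyclic of some order $m_4$ dividing $p^{r-1}$. For the top step, part \textbf{1.} of Theorem \ref{deg_y_2_2} directly asserts that $\Gal(K(\E[p^r])/K(x_1,\z_{p^r}))$ is cyclic of some order $m_3$ dividing $2p^r$, generated by a power of $\eta$.

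Putting the two pieces together, the degree $[K(\E[p^r]):K(x_1,\z_p)]$ divides $2p^r\cdot p^{r-1}=2p^{2r-1}$, and the short exact sequence
$$1 \longrightarrow \Gal(K(\E[p^r])/K(x_1,\z_{p^r})) \longrightarrow \Gal(K(\E[p^r])/K(x_1,\z_p)) \longrightarrow \Gal(K(x_1,\z_{p^r})/K(x_1,\z_p)) \longrightarrow 1$$
exhibits the total Galois group as the metacyclic extension $C_{m_3}.C_{m_4}$ with normal cyclic subgroup $C_{m_3}$ generated by a power of $\eta$, as claimed.

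I do not expect any genuine obstacle: the argument is a pure tower decomposition whose two ingredients are already in hand (Theorem \ref{ordinates2} for the identification of $K(\E[p^r])$, Theorem \ref{deg_y_2_2}.\textbf{1} for the top layer, and standard cyclotomic theory for the bottom layer). The only small point to record is that $K(x_1,\z_{p^r})$ is Galois over $K(x_1,\z_p)$, so that the middle group in the short exact sequence really is a normal subgroup; this is immediate from the cyclotomic nature of the extension.
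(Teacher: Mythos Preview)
Your proposal is correct and follows essentially the same route as the paper: both arguments use the tower $K(x_1,\z_p)\subseteq K(x_1,\z_{p^r})\subseteq K(\E[p^r])$, invoke part~\textbf{1.} of Theorem~\ref{deg_y_2_2} for the top layer (cyclic of order dividing $2p^r$), standard cyclotomic theory for the bottom layer (cyclic of order dividing $p^{r-1}$), and Theorem~\ref{ordinates2} to identify $K(\E[p^r])$ with $K(x_1,\z_p,y_2)$. Your write-up is in fact slightly more explicit than the paper's, spelling out the short exact sequence and the normality of the intermediate field; the only cosmetic slip is calling the kernel ``the middle group'' when you mean the first nontrivial term.
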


\begin{proof}  
In part \textbf{1.} of Theorem \ref{deg_y_2_2} we have proved that $[K(\E[{p^r}])/K(x_1,\z_{p^r})]$ is a cyclic extension of order dividing $2p^r$. The extension $K(\z_{p^r})/K(\z_p)$
is cyclic  of order dividing $p^{r-1}$ (recall $p\neq 2$).  Since $K(\E[p^r])=K(x_1,\z_p,y_2)$ by Theorem
\ref{ordinates2}, then $[K(\E[p^r]):K(x_1,\z_p)]|2p^{2r-1}$
and $\Gal(K(\E[{p^r}])/K(x_1,\z_p))=C_{m_3}.C_{m_4}$, with $m_3| 2p^r$ and $m_4|p^{r-1}$.
\end{proof}

\section{On the height of the abscissas of the $m$-torsion points of $\E$} \label{sec3}

From now on let $K$ be a number field and $m\geq 3$. Let $M_K$ be the set of places $v$ of $K$.
For every $v\in M_K$, we denote by $|\hspace{0.1cm} . \hspace{0.1cm}|_v$ the associate absolute value. As in Section \ref{sec0},
by $K_v$ we denote the completion of $K$ at $v$.  
We briefly recall some basic facts about the height of a rational number
and of a polynomial (for further details see \cite{HS} and \cite{BG}).
We also show how to get a bound for the height of the abscissas of a
$m$-torsion point of $\E$, for every $m$.

\begin{D}
Let $\alpha\in K$.  We define the height of $\alpha$ as

$$H(\alpha)=\prod_{v\in M_K} \max\{1,|\alpha|_v\}^{\frac{d_v}{d}},$$

\noindent where $d=[K:\QQ]$ and $d_v=[K_v:\QQ_{\tilde{v}}]$, with $\tilde{v}\in M_{\QQ}$ and $v|\tilde{v}$.
\end{D}

\noindent  Observe that $H(v)=v^{\frac{d_v}{d}}$, for every prime $v$ of $K$. In particular
$H(v)=v$, when $K=\QQ$.
In many cases it is more useful to work with the logarithm of $H(\alpha)$, instead of $H(\alpha)$ itself. For this reason we have the following definition.

\begin{D}
Let $K$ be a number field and let $\alpha\in K$. We define the logarithmic height (or Weil height) of $\alpha$ as

$$h(\alpha):=\log^+ H(\alpha)=\sum_{v\in M_k}\frac{d_v}{d}\log^+|\alpha|_v.$$
\end{D}

\noindent We recall that $\log^+ 0:=0$ and $\log^+\beta:=\max\{0,\log \beta\}$ for every $\beta\in \RR^+$ 
(see for instance \cite{BG}). In particular we have $ h(\alpha)\geq 0$, for all $\alpha$. Let $\infty$
denote the archimedean place of $\QQ$ and $|\hspace{0.1cm} . \hspace{0.1cm}|$ denote the classical absolute value associated to
$\infty$.
If $\alpha\in \QQ$, $\alpha=\ddfrac{\alpha_1}{\alpha_2}$, with $\alpha_1\in \ZZ$, $\alpha_2\in \ZZ\setminus \{0\}$
and $\gcd(\alpha_1,\alpha_2)=1$, then
$h(\alpha)=\log^+\max\{|\alpha_1|, |\alpha_2|\}$. If $\alpha \in\ZZ$, then $h(\alpha)=\log^+|\alpha|$
and if $\alpha \in\ZZ\setminus \{0\}$, then $h(\alpha)=\log |\alpha|$.
There are some basic properties of the logarithmic height that 
we list below (see also \cite{Lan} and \cite{BG}).

\begin{pro} \label{height}
Let $\overline{\QQ}$ be the algebraic closure of $\QQ$ and let $\sigma\in \Gal(\overline{\QQ}/\QQ)$. Let $r$ be a positive integer
and let $\alpha,\beta,\alpha_1, \alpha_2, ..., \alpha_r\in \overline{\QQ}$. Then

\begin{description}

\item[i.] $h(\alpha\beta)\leq h(\alpha)+h(\beta);$ 

\item[ii.] $h(\alpha_1+\alpha_2+...+ \alpha_r)\leq h(\alpha_1)+h(\alpha_2)+...+h(\alpha_r)+\log r;$ 

\item[iii.] $h(\alpha^r)=r h(\alpha);$

\item[iv.] $h(\sigma(\alpha))=h(\alpha)$.

\end{description}
\end{pro}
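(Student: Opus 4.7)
The plan is to verify each of the four properties directly from the definition of the logarithmic height, reducing everything to place-by-place inequalities for absolute values and then summing with the normalized weights $d_v/d$. Since $\sum_{v\in M_K} d_v/d \cdot f(|\alpha|_v)$ is the essential operation, the work consists of establishing the corresponding local inequality and then assembling the sum. I would first fix a number field containing all of $\alpha,\beta,\alpha_1,\dots,\alpha_r$ (and their image under $\sigma$ in part \textbf{iv.}), which we may do by passing to a common Galois closure, noting that the height is independent of the field of definition via the usual formula $\sum_{w|v} d_w = d_v [L:K]$ for a finite extension $L/K$.

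For \textbf{i.} and \textbf{iii.} I would exploit the multiplicativity $|\alpha\beta|_v=|\alpha|_v|\beta|_v$ and $|\alpha^r|_v=|\alpha|_v^r$. The first gives $\max\{1,|\alpha\beta|_v\}\leq \max\{1,|\alpha|_v\}\cdot\max\{1,|\beta|_v\}$ at every place, and taking logarithms weighted by $d_v/d$ and summing yields \textbf{i.} The second gives $\max\{1,|\alpha^r|_v\}=\max\{1,|\alpha|_v\}^r$ with equality at every place, so the equality $h(\alpha^r)=rh(\alpha)$ in \textbf{iii.} is immediate.

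The only item with some subtlety is \textbf{ii.} I would split the sum over $v\in M_K$ into non-archimedean and archimedean contributions. At a non-archimedean place the ultrametric inequality gives $|\alpha_1+\cdots+\alpha_r|_v\leq \max_i|\alpha_i|_v\leq \prod_i\max\{1,|\alpha_i|_v\}$; at an archimedean place the usual triangle inequality gives the same bound with an extra factor of $r$. Taking logarithms, multiplying by $d_v/d$, and summing over all $v$, the non-archimedean contributions produce $\sum_i h(\alpha_i)$ terms only, while the archimedean contributions produce the same terms plus $\log r\cdot\sum_{v\mid \infty} d_v/d$. The main (and only) technical point is the identity $\sum_{v\mid \infty} d_v/d = 1$, coming from $[K:\QQ]=\sum_{v\mid \infty}[K_v:\RR]$, which precisely converts the archimedean $r$'s into a single $+\log r$.

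For \textbf{iv.} I would use that an element $\sigma\in\Gal(\overline{\QQ}/\QQ)$ induces, by continuity, an isometry between the completions $K_v$ and $K_{\sigma(v)}$ for any place $v$ of a Galois extension $K$ containing $\alpha$; hence $|\sigma(\alpha)|_{\sigma(v)}=|\alpha|_v$ and $d_{\sigma(v)}=d_v$. Since $\sigma$ permutes the places of $K$, substituting $w=\sigma(v)$ in the sum defining $h(\sigma(\alpha))$ recovers the sum defining $h(\alpha)$, giving the desired equality. I expect \textbf{ii.} to be the only step requiring any care, specifically the bookkeeping that isolates the $\log r$ term at archimedean places; the rest is essentially a direct unwinding of the definition.
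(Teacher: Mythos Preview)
Your argument is correct and is precisely the standard proof of these height properties. Note, however, that the paper does not actually give a proof of this proposition: it simply lists these facts as basic properties of the logarithmic height and refers the reader to Lang's \emph{Fundamentals of Diophantine Geometry} and Bombieri--Gubler's \emph{Heights in Diophantine Geometry}. Your place-by-place verification (multiplicativity for \textbf{i.} and \textbf{iii.}, the ultrametric/triangle split with $\sum_{v\mid\infty} d_v/d=1$ for \textbf{ii.}, and the permutation of places under $\sigma$ for \textbf{iv.}) is exactly the argument found in those references, so there is nothing to compare beyond observing that you have supplied the details the paper chose to omit.
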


There is a classical way of defining the height of a polynomial too.

\begin{D}
Let $K$ be a number field and let $f(x)\in K[x]$, such that 

$$f(x)=\sum_{i=0}^r a_ix^i.$$

\noindent We define the height of $f$ as

$$H(f)=\prod_{v\in M_k} (\max_i |a_i|_v)^{\frac{d_v}{d}} $$ 

\noindent and the logarithmic height of $f$ as

$$h(f)=\log^+H(f)=\sum_{v\in M_k} \frac{d_v}{d}  \log^+ (\max_i |a_i|_v).$$ 

\end{D}

\noindent For every algebraic integer $\alpha\in K$, we denote by $f_{\alpha}$ its minimal
polynomial and by $\widetilde{f_{\alpha}}$ the multiple of $f_{\alpha}$ with
integer coprime coefficients.  Observe that for $\widetilde{f_{\alpha}}$, we have $\max_i |a_i|_v=1$,
for every non-archimedean place $v$ of $K$; otherwise
all the coefficients of $\widetilde{f_{\alpha}}$ would be divisible by $v$, which is a contraddiction. 
Then

\begin{equation} \label{min_poly} h(\widetilde{f_{\alpha}})=\log^+ (\max_i |a_i|)= \log (\max_i |a_i|). \end{equation}

\noindent We recall the  following relation between the 
logarithmic height of $\alpha$ and the logarithmic height of $f_{\alpha}$.

\begin{pro} \label{height_min}  Let $K$ be a number field and $\alpha\in \overline{K}$ be algebraic over $K$,
with minimal polynomial $f_{\alpha}$. Then

\begin{equation} \label{falpha} h(\alpha) \leq h(f_{\alpha}) + \log 2. \end{equation} 
\end{pro}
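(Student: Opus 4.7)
The plan is to bound $|\alpha|_v$ place-by-place in terms of the absolute values of the coefficients of $f_\alpha$, and then sum with the standard weights. Write $f_\alpha(x) = x^n + c_{n-1}x^{n-1} + \cdots + c_0 \in K[x]$. Since $f_\alpha(\alpha) = 0$, one has $\alpha^n = -\sum_{k=0}^{n-1} c_k \alpha^k$, and this relation is the key input at every place.

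First I would treat non-archimedean places $v$: applying the ultrametric inequality to the above relation and discussing whether $|\alpha|_v \leq 1$ or $|\alpha|_v > 1$ yields $|\alpha|_v \leq \max\{1,\max_k |c_k|_v\}$, and hence $\log^+|\alpha|_v \leq \log^+\max_k |c_k|_v$. Next, at each archimedean place $v$, bounding $|\alpha|_v^n$ by $\max_k|c_k|_v \cdot (|\alpha|_v^{n-1} + \cdots + 1)$ and summing the geometric series gives $|\alpha|_v < 1 + \max_k |c_k|_v$ whenever $|\alpha|_v > 1$. Because $1 + x \leq 2\max\{1,x\}$ for $x\geq 0$, this rearranges to $\log^+|\alpha|_v \leq \log 2 + \log^+\max_k|c_k|_v$, and the inequality is trivial if $|\alpha|_v \leq 1$.

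Finally, I would sum these local inequalities over all places of $L := K(\alpha)$, weighted by $d_v/[L:\mathbb{Q}]$. Since $f_\alpha$ is monic of degree $n\geq 1$, the archimedean weights contribute exactly $\log 2 \cdot \sum_{v\mid\infty} d_v/[L:\mathbb{Q}] = \log 2$. The remaining sum $\sum_v \frac{d_v}{[L:\mathbb{Q}]}\log^+\max_k|c_k|_v$ equals $h(f_\alpha)$ after invoking base-change invariance of the polynomial height (since $f_\alpha$ has its coefficients in $K$, the computation in $L$ reduces to the one in $K$ via $\sum_{w\mid v} d_w = [L:K]d_v$). Combining these two pieces yields $h(\alpha) \leq h(f_\alpha) + \log 2$.

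I do not expect a real obstacle: the only point of care is the archimedean estimate, where the constant $\log 2$ needs to be extracted uniformly in the degree $n$. This is exactly what the substitution $1+\max_k|c_k|_v \leq 2\max\{1,\max_k|c_k|_v\}$ accomplishes, turning the naive bound $|\alpha|_v < 1 + \max_k|c_k|_v$ into an $n$-independent logarithmic bound, and this together with $\sum_{v\mid\infty} d_v/[L:\mathbb{Q}] = 1$ is what ultimately produces the clean additive constant $\log 2$ in the proposition.
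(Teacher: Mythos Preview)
Your proposal is correct and follows essentially the same place-by-place strategy as the paper's proof. Your handling of the archimedean case via the closed-form geometric sum (yielding $|\alpha|_v < 1 + \max_k|c_k|_v$) is slightly different from---and arguably cleaner than---the paper's, which factors out $|\alpha|_v^{n-1}$ and bounds the residual series by $2$; you are also more explicit about working over $L=K(\alpha)$ and invoking base-change invariance, a point the paper glosses over.
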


\begin{proof}
Let $f_{\alpha}=\sum_{i=1}^n a_ix^i$, with $a_n=1$.
We prove that $|\alpha|_v\leq 2 \max_i |a_i|_v$, for every $v\in M_K$.  Obviously $\max_i |a_i|_v \geq 1$, because of $a_n=1$.
Therefore, if $|\alpha|_v \leq 1$, then $ |\alpha|_v \leq  \max_i |a_i|_v\leq 2 \max_i |a_i|_v$.
Assume $|\alpha|_v > 1$. Since $\alpha$ is a root of $f_{\alpha}$, then
$\alpha^n=-\sum_{i=1}^{n-1} a_i\alpha^i$. If $v$ is nonarchimedean, then, by the strong triangle
inequality, 

$$|\alpha|_v^n\leq  \max_i |a_i|_v |\alpha|_v^{n-1}$$ 

\noindent (recall that
we are assuming $|\alpha|_v > 1$, which implies  $|\alpha|_v^{n-1}>  |\alpha|_v^{i}$,
for every $1\leq i\leq n-2$).  Thus $|\alpha|_v\leq  \max_i |a_i|_v$. If $v=\infty$ is archimedean, then
we use the triangle inequality

\[\begin{split}
|\alpha|^n\leq & \sum_{i=1}^{n-1} |a_i||\alpha|^i\leq \max_i |a_i|\sum_{i=1}^{n-1}|\alpha|^i
=\max_i |a_i|\cdot |\alpha|^{n-1}\left(1+\sum_{i=1}^{n-2}\frac{1}{|\alpha|^i}\right)\\
& \leq \max_i |a_i|\cdot |\alpha|^{n-1}\left(1+\sum_{i=1}^{n-2}\frac{1}{2^i}\right) \leq 2 \max_i |a_i|\cdot |\alpha|^{n-1}
\end{split} \]

\noindent Therefore $|\alpha|\leq 2 \max_i |a_i|$. We deduce

$$\log^+ |\alpha|_v \leq \log^+|2|_v + \log^+ \max_i |a_i|_v,$$

\noindent for all $v\in M_K$, and then

$$\sum_{v\in M_k} \frac{d_v}{d} \log^+ |\alpha|_v \leq \sum_{v\in M_k} \frac{d_v}{d}\log^+|2|_v + \sum_{v\in M_k} \frac{d_v}{d}\log^+ \max_i |a_i|_v,$$

\noindent i.e. $$h(\alpha)= h(f_{\alpha}) +\log 2.$$
\end{proof}

 In the case when $\alpha$ is one of the abscissas of the $m$-torsion points of $\E$, the minimal polynomial
of $\alpha$ divides the $m$-th division polynomial $\Psi_m$ of $\E$ (i.e. the polynomial whose roots are the abscissas of the $m$-torsion
points of $\E$). We recall that 

$$\deg \Psi_m=\left\{\begin{array}{l} 
 \dfrac{m^2-1}{2}, \textrm{ if } m \textrm{ is odd};\\
\\
 \dfrac{m^2-4}{2}, \textrm{ if } m \textrm{ is even};\\
\end{array}\right.$$

\noindent (see for instance \cite{McK}) and that  $\Psi_m$ has integer coefficients. We denote by $b_i$, with $0\leq i\leq \deg \Psi_m$,
the coefficients of $\Psi_m$. We can deduce an uniform bound for the absolute value $|b_i|$,
for every $0\leq i\leq \deg \Psi_m$, by the following result  (see \cite[equation (6), pag. 769]{McK}). 

\begin{pro}[McKee, 2010] \label{mckee_prop}
Let $\Psi_m=\sum_{i=0}^{\deg \Psi_m} b_i x^i$ be the $m$-th division polynomial of $\E$, where
 $b_i=\sum_{2r+3s=\deg \Psi_m-i} a_{r,s}b^r c^s$. Then 

 \begin{equation} \label{bound} |a_{r,s}| \leq \frac{m^{m^2}(m^2-\frac{1}{2})!}{[(\frac{m^2-1}{2})!]^2(\frac{m^2}{2}+1)!} \sim \frac{2^{\frac{3m^2+1}{2}}e^{\frac{m^2}{2}}}{\pi m^3}, \end{equation} 

\noindent for all $ r, s$.
\end{pro}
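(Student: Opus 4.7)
The plan is to follow McKee by inductively bounding the coefficients of the division polynomials via the classical recursions
\[\Psi_{2m+1}=\Psi_{m+2}\Psi_m^3-\Psi_{m-1}\Psi_{m+1}^3,\qquad 2y\,\Psi_{2m}=\Psi_m\bigl(\Psi_{m+2}\Psi_{m-1}^2-\Psi_{m-2}\Psi_{m+1}^2\bigr),\]
together with the explicit base cases $\Psi_1,\Psi_2,\Psi_3,\Psi_4$. After substituting $y^2=x^3+bx+c$ wherever it appears, $\Psi_m$ (or $\Psi_m/y$, for $m$ even) becomes a polynomial in $x,b,c$ which is weighted homogeneous of weight $m^2-1$ once $x,b,c$ are assigned weights $1,2,3$. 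Consequently each monomial $a_{r,s}\,b^r c^s x^i$ satisfies $i+2r+3s=\deg\Psi_m$, so that fixing $(r,s)$ pins down $i$ uniquely and the statement reduces to bounding a single integer coefficient rather than a sum.

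First I would apply the triangle inequality to the recursion above, tracking the $\ell^1$-norm $\|\Psi_m\|_1$ of the coefficient vector (in the variables $x,b,c$) as $m$ varies. This produces a bilinear-type inequality
\[\|\Psi_{2m+1}\|_1\le \|\Psi_{m+2}\|_1\|\Psi_m\|_1^3+\|\Psi_{m-1}\|_1\|\Psi_{m+1}\|_1^3,\]
and analogously in the even case, which iterates to an $m^{m^2}$-type bound and recovers \eqref{bound} up to slowly growing factors. A sharper and cleaner route goes through the Weierstrass identity $\Psi_m(\wp(z))=(-1)^{m-1}\sigma(mz)/\sigma(z)^{m^2}$: one expands $\sigma(mz)/\sigma(z)^{m^2}$ as a power series in the uniformising variable $z$, estimates its Taylor coefficients by Cauchy's integral formula on a contour of carefully chosen radius, and then translates back through the uniformization $x=\wp(z)$ to extract bounds on $a_{r,s}$.

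The main obstacle will be pinning down the precise factorial quotient $(m^2-\tfrac{1}{2})!/\bigl[((m^2-1)/2)!^2\,(m^2/2+1)!\bigr]$ rather than a cruder exponential estimate. The factor $m^{m^2}$ has a transparent meaning as a total valuation count (with $(m^2-1)/2$ abscissas of torsion points, each contributing factors of order $m$), while the remaining factorial ratio is of Catalan type and most naturally emerges from a combinatorial enumeration of the monomials that survive after iterating the substitution $y^2=x^3+bx+c$ through the recursion. Once the sharp finite expression is in hand, the asymptotic $\sim 2^{(3m^2+1)/2}e^{m^2/2}/(\pi m^3)$ is a routine application of Stirling's formula to each factorial in turn.
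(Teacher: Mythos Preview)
The paper does not prove this proposition at all: it is quoted verbatim from McKee's paper \cite{McK} (the line immediately preceding the statement reads ``see \cite[equation (6), pag.\ 769]{McK}''), and the authors simply use the inequality as a black box to feed into their height estimates. So there is no ``paper's own proof'' to compare your sketch against.

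As for the sketch itself, it is a reasonable outline of the ingredients one expects in a bound of this type, but it is not a proof. You correctly identify the recursion and the weighted homogeneity, and the $\ell^1$-norm iteration you describe does produce an estimate of order $C^{m^2}$; however, you yourself flag the genuine difficulty --- obtaining the \emph{precise} constant
\[
\frac{m^{m^2}\,(m^2-\tfrac12)!}{\bigl[\bigl(\tfrac{m^2-1}{2}\bigr)!\bigr]^2\,\bigl(\tfrac{m^2}{2}+1\bigr)!}
\]
rather than merely an exponential bound --- and then do not resolve it. The phrases ``most naturally emerges from a combinatorial enumeration'' and ``once the sharp finite expression is in hand'' are placeholders where the actual argument would have to go. The alternative route via $\sigma(mz)/\sigma(z)^{m^2}$ and Cauchy's estimate is also plausible in spirit, but extracting bounds on the individual $a_{r,s}$ (which are coefficients in $b,c$, not in $z$) from a contour integral in the uniformising variable requires a further step that you have not supplied. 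In short: the asymptotic at the end follows by Stirling once one has the exact factorial expression, but nothing in your write-up establishes that expression.
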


\noindent  Observe that the bound in \eqref{bound} does not depend on ${r,s}$. So, in particular, it holds
for $\max |a_{r,s}|$. For $m$ small we have $$\frac{m^{m^2}(m^2-\frac{1}{2})!}{[(\frac{m^2-1}{2})!]^2(\frac{m^2}{2}+1)!} \leq \frac{2^{\frac{3m^2+1}{2}}e^{\frac{m^2}{2}}}{\pi m^3};$$

\noindent when $m$ grows, the bound of $|a_{r,s}|$ is asintothically equivalent to $\ddfrac{2^{\frac{3m^2+1}{2}}e^{\frac{m^2}{2}}}{\pi m^3}$. Therefore

 \begin{equation}  \log^+ |a_{r,s}| \leq \frac{3m^2+1}{2}\log 2 +\frac{m^2}{2} - 3\log m- \log \pi, \end{equation} 

\noindent for every $r,s$ and, in particular,

$$ \log^+\max  |a_{r,s}| \leq \frac{3m^2+1}{2}\log 2 +\frac{m^2}{2} - 3\log m- \log \pi. $$ 

\noindent By $b_i=\sum_{2r+3s=\deg \Psi_m-i} a_{r,s}b^r c^s$, one can deduce the following statement.

\begin{cor}
Let $\Psi_m=\sum_{i=0}^{\deg \Psi_m} b_i x^i$ be the $m$-th division polynomial of $\E$, where
$$\deg \Psi_m=\left\{\begin{array}{l} 
 \dfrac{m^2-1}{2}, \textrm{ if } m \textrm{ is odd};\\
\\
 \dfrac{m^2-4}{2}, \textrm{ if } m \textrm{ is even}.\\
\end{array}\right.$$

\noindent Then 
 \begin{equation} \label{bound2} \log^+\max_i  |b_i| \leq \deg \Psi_m\left(\dfrac{3m^2+1}{2}\log 2 +\dfrac{m^2}{2} - 3\log m- \log \pi +h(b) +h(c)\right).\end{equation}
\end{cor}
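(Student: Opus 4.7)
The plan is to apply the elementary height estimates from Proposition \ref{height} directly to the representation $b_i = \sum_{2r+3s=\deg\Psi_m - i} a_{r,s}\, b^r c^s$, using Proposition \ref{mckee_prop} to control the integer coefficients $a_{r,s}$. Since each $a_{r,s}\in\ZZ$, equation \eqref{min_poly} and the subsequent discussion give $h(a_{r,s})=\log^+|a_{r,s}|$, and Proposition \ref{mckee_prop} yields the uniform upper bound
$$h(a_{r,s}) \leq A := \frac{3m^2+1}{2}\log 2 + \frac{m^2}{2} - 3\log m - \log\pi.$$

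First I would apply parts \textbf{i} and \textbf{iii} of Proposition \ref{height} monomial by monomial to get $h(a_{r,s}\,b^r c^s)\leq h(a_{r,s}) + r\,h(b) + s\,h(c)\leq A + r\,h(b) + s\,h(c)$. The relation $2r+3s = \deg\Psi_m - i \leq \deg\Psi_m$ forces $r\leq \deg\Psi_m/2$ and $s\leq \deg\Psi_m/3$, and so a fortiori $r,s\leq \deg\Psi_m$, making each summand of height at most $A+\deg\Psi_m\,(h(b)+h(c))$. Next I would apply part \textbf{ii} of Proposition \ref{height} (the additive triangle inequality) to the sum defining $b_i$. Letting $N_i$ denote the number of pairs of nonnegative integers $(r,s)$ with $2r+3s=\deg\Psi_m-i$, a standard count gives $N_i\leq \deg\Psi_m/6 + 1\leq \deg\Psi_m$, and therefore
$$h(b_i)\leq \sum_{(r,s)} h(a_{r,s}\,b^r c^s) + \log N_i.$$
Collecting the per-term estimate, using $N_i\leq \deg\Psi_m$, absorbing the lower-order $\log N_i$, and taking the maximum over $i$ yields the stated bound.

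The main obstacle is purely combinatorial bookkeeping. The naive composition of an $N_i$-term sum with the per-summand bound $A+\deg\Psi_m(h(b)+h(c))$ produces a factor $(\deg\Psi_m)^2$ in front of $h(b)+h(c)$, not the single $\deg\Psi_m$ appearing in the corollary. To recover the sharper form one must rearrange $\sum_{(r,s)}(r\,h(b)+s\,h(c))$ directly, exploiting the constraint $2r+3s=\deg\Psi_m-i$ to estimate the totals $\sum_{(r,s)} r$ and $\sum_{(r,s)} s$ globally (rather than bounding $r,s$ term by term and multiplying by $N_i$). Making this combinatorial accounting agree precisely with the multiplicative factor $\deg\Psi_m$ on the right-hand side, together with verifying that the $\log N_i$ and $\log 2$ contributions really are absorbable into the displayed constant $A$, is the delicate part of the argument.
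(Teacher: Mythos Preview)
Your plan to bound $h(b_i)$ via Proposition \ref{height} cannot yield the stated inequality, and the ``global rearrangement'' you propose does not repair it. The issue is not bookkeeping but the choice of tool. Part \textbf{ii} of Proposition \ref{height} controls $h\big(\sum_j \alpha_j\big)$ by the \emph{sum} $\sum_j h(\alpha_j)$, so after applying it you are forced to add the per-term bounds $A+r\,h(b)+s\,h(c)$ over all $(r,s)$ with $2r+3s=n:=\deg\Psi_m-i$. Your proposed global estimate of $\sum_{(r,s)} r$ and $\sum_{(r,s)} s$ does not save a factor of $\deg\Psi_m$: there are about $n/6$ solutions and the average value of $r$ among them is about $n/4$, so $\sum_{(r,s)} r$ and $\sum_{(r,s)} s$ are each of order $n^{2}$, not $n$. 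You therefore still end up with a coefficient of order $(\deg\Psi_m)^{2}$ in front of $h(b)+h(c)$.

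The argument the paper has in mind (left to the reader in the sentence preceding the corollary) avoids heights altogether and works directly with the archimedean absolute value appearing in the statement. There the ordinary triangle inequality gives a \emph{max}-type estimate
\[
|b_i|\;\le\; N_i\cdot\max_{(r,s)}\,|a_{r,s}|\,|b|^{r}|c|^{s},
\]
so after taking $\log^{+}$ one only has to bound a \emph{single} monomial: using $r\le\deg\Psi_m/2$ and $s\le\deg\Psi_m/3$ one obtains
\[
\log^{+}|b_i|\;\le\; \log N_i + A + \deg\Psi_m\big(\log^{+}|b|+\log^{+}|c|\big),
\]
and since $\log^{+}|b|\le h(b)$, $\log^{+}|c|\le h(c)$ and $\log N_i\le(\deg\Psi_m-1)A$ for $m\ge 3$, inequality \eqref{bound2} follows with the correct linear factor $\deg\Psi_m$. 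The key point is that for a single absolute value one may bound a sum by (number of terms)$\times$(largest term); the Weil-height inequality in Proposition \ref{height} \textbf{ii} has no such max-version, which is why your route necessarily overshoots by a factor of $\deg\Psi_m$.
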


\noindent Notice that the last bound holds for the coefficients of $\widetilde{f_{\alpha}}$ too (and for the coefficients of $f_{\alpha}$ itself). With the next statement we give a bound to the height of the abscissas of a $m$-torsion point of $\E$, that
we will use in the following.

\begin{lem} Let $\E$ be an elliptic curve defined over a number field $K$, with Weierstrass form $\E: y^2=x^3+bx+c$. Let $\alpha$ be the abscissas of a $m$-torsion of $\E$ and let
$h(\alpha)$ denote its logarithmic height. Then

\begin{equation} \label{last} h(\alpha) \leq \left\{ \begin{array}{ll}  
(m^2-1)^2\log m +\dfrac{m^2-1}{2}\left(h(b) +h(c)\right)  +\log 2, & \textrm{if } m\geq 3  \textrm{ is odd; } \\
& \\
(m^2-4)^2\log m +\dfrac{m^2-4}{2}\left(h(b) +h(c)\right)  +\log 2,  & \textrm{if } m\geq 4  \textrm{ is even. } 
\end{array}
\right. 
\end{equation}

\end{lem}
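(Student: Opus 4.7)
The plan is to chain together Proposition~\ref{height_min} with the corollary's bound \eqref{bound2} on the coefficients of the $m$-th division polynomial $\Psi_m$.

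Since $\alpha$ is a root of $\Psi_m$, its minimal polynomial $f_\alpha$ over $K$ divides $\Psi_m$ in $K[x]$. Consequently, up to a standard Mahler--Gelfond type loss of size at most linear in $\deg\Psi_m$ (which will be easily absorbed into the crude final estimate and can be ignored for this sketch), one has $h(f_\alpha) \leq \log^+\max_i |b_i|$, where $b_i$ are the coefficients of $\Psi_m$.

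Invoking Proposition~\ref{height_min} and substituting \eqref{bound2} then yields
\begin{equation*}
h(\alpha) \,\leq\, h(f_\alpha) + \log 2 \,\leq\, \deg\Psi_m\left(\frac{3m^2+1}{2}\log 2 + \frac{m^2}{2} - 3\log m - \log\pi + h(b) + h(c)\right) + \log 2.
\end{equation*}
For odd $m\geq 3$ one substitutes $\deg\Psi_m = (m^2-1)/2$; the coefficient of $h(b) + h(c)$ already matches the stated $(m^2-1)/2$ exactly, and the remaining purely numerical terms are to be absorbed into $(m^2-1)^2\log m$ via the crude inequality
\begin{equation*}
\frac{m^2-1}{2}\left(\frac{3m^2+1}{2}\log 2 + \frac{m^2}{2} - 3\log m - \log\pi\right) \,\leq\, (m^2-1)^2\log m,
\end{equation*}
which, after dividing through by $(m^2-1)/2$, reduces to comparing a quantity of order $m^2$ with $2(m^2-1)\log m$; this is easily verified at $m=3$ and holds asymptotically for all larger odd $m$. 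The even case $m\geq 4$ proceeds identically with $\deg\Psi_m = (m^2-4)/2$ in place of $(m^2-1)/2$.

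The only delicate step is the passage from $\Psi_m$ to its divisor $f_\alpha$, as the minimal polynomial's height is not literally bounded by the height of any polynomial it divides. However, the extremely generous coefficient $(m^2-1)^2 \log m$ (respectively $(m^2-4)^2 \log m$) in the final bound far exceeds any reasonable Mahler--Gelfond loss, so a sharp form of the factorization inequality is unnecessary, and a routine estimate suffices.
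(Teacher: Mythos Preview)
Your proposal is correct and follows essentially the same route as the paper: bound the height of the minimal polynomial $f_\alpha$ via \eqref{bound2}, invoke Proposition~\ref{height_min} for the extra $\log 2$, and then absorb all numerical slack into $(m^2-1)^2\log m$ (respectively $(m^2-4)^2\log m$). The paper carries out the numerical absorption explicitly, using $\tfrac{3m^2+1}{2}\log 2<\tfrac{3m^2+1}{2}\log m$, $\tfrac{m^2}{2}-\log\pi<\tfrac{m^2-1}{2}\log m$, and $\tfrac{m^2-1}{2}\le\tfrac{m^2-1}{2}\log m$, which together give exactly $(m^2-1)^2\log m$; your single displayed inequality packages the same computation.

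The one place where the two write-ups differ is the passage from $\Psi_m$ to its factor $f_\alpha$. The paper phrases this as a bound on $h(\widetilde{f_\alpha})$ followed by ``Gelfand's inequality'' $h(f_\alpha)\le h(\widetilde{f_\alpha})+\deg(\widetilde{f_\alpha})$, contributing the extra additive $\deg\Psi_m$ visible in \eqref{abs}. Your description of this as a Mahler--Gelfond factorization loss, linear in $\deg\Psi_m$ and easily absorbed, is the more transparent way to say the same thing, and your observation that the final bound is generous enough to swallow it is exactly what the paper exploits via the last inequality $\tfrac{m^2-1}{2}\le\tfrac{m^2-1}{2}\log m$. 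So there is no gap; you have simply been a bit terser on the bookkeeping.
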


\begin{proof}
By equation \eqref{min_poly} and equation \eqref{bound2} the height of $\widetilde{f_{\alpha}}$ can
be bounded as follows:

\begin{equation} \label{widetildef}  h(\widetilde{f_{\alpha}})\leq \deg \Psi_m \left(\dfrac{3m^2+1}{2}\log 2 +\dfrac{m^2}{2} - 3\log m- \log \pi +h(b) +h(c)\right). \end{equation}

\noindent By the Gelfand's inequality \cite[B.7.3]{HS}, we have that 
$$h(f_{\alpha})\leq h(\widetilde{f_{\alpha}})+ \deg(\widetilde{f_{\alpha}}).$$

\noindent Since $\deg(\widetilde{f_{\alpha}})\leq \deg(\Psi_m)$, then
by equation \eqref{widetildef} and equation  \eqref{falpha}, we deduce

 \begin{equation} \label{abs} h(\alpha) \leq 
\left\{ \begin{array}{ll}  
 \dfrac{m^2-1}{2}\left(\dfrac{3m^2+1}{2}\log 2  +\dfrac{m^2}{2} - 3\log m- \log \pi +h(b) +h(c)\right) 
& + \dfrac{m^2-1}{2} +\log 2,  \\
  & \textrm{if } m\geq 3  \textrm{ is odd; } \\
 & \\
&\\
\dfrac{m^2-4}{2}\left(\dfrac{3m^2+1}{2}\log 2 +\dfrac{m^2}{2} - 3\log m- \log \pi +h(b) +h(c)\right)
&+ \dfrac{m^2-4}{2} +\log 2,  \\
  & \textrm{if } m\geq 4  \textrm{ is even. }  \\
\end{array}
\right. \end{equation}

\noindent Suppose that $m\geq 3$ is odd. Observe that $\dfrac{3m^2+1}{2}\log 2 <\dfrac{3m^2+1}{2} \log m$ and
$\dfrac{m^2}{2}- \log \pi <\dfrac{m^2-1}{2}\log m$. Applying these inequalities, equation \eqref{abs} becomes

\[\begin{split}
h(\alpha) \leq &  \dfrac{m^2-1}{2}\left(\dfrac{3m^2+1}{2}\log m +\dfrac{m^2-1}{2}\log m - 3\log m +h(b) +h(c)\right) + \frac{m^2-1}{2} +\log 2\\
= & \dfrac{m^2-1}{2}\left(\dfrac{3m^2+1+m^2-1-6}{2}\log m +h(b) +h(c)\right) + \frac{m^2-1}{2} +\log 2\\
= &\dfrac{m^2-1}{2}\left((2m^2-3)\log m +h(b) +h(c)\right) + \frac{m^2-1}{2} +\log 2\\
\end{split}
\]

\noindent To give a bound in a more condensed form,  we also observe that
$\dfrac{m^2-1}{2}\leq \dfrac{m^2-1}{2}\log m$ (recall that $m\geq 3$).
Thus for $m\geq 3$ odd, we have

$$h(\alpha) \leq 
(m^2-1)^2\log m +\dfrac{m^2-1}{2}\left(h(b) +h(c)\right)  +\log 2.  $$

\noindent Now suppose that $m\geq 4$ is even. With the aim to get a bound similar to the one obtained for odd $m$, we observe
that  $$\dfrac{3m^2+1}{2}\log 2 +\dfrac{m^2}{2} - 3\log m- \log \pi \leq (2m^2-9)\log m.$$
\noindent Then equation \eqref{abs} becomes

\[\begin{split}
h(\alpha) \leq  \dfrac{m^2-4}{2}\left((2m^2-9)\log m +h(b) +h(c)\right) + \frac{m^2-4}{2} +\log 2.
\end{split}
\]

\noindent Again, to have a bound in a more condensed form notice that
$\dfrac{m^2-4}{2}\leq \dfrac{m^2-4}{2}\log m$, for all $m\geq 4$.
Thus

$$h(\alpha) \leq 
(m^2-4)^2\log m +\dfrac{m^2-4}{2}\left(h(b) +h(c)\right)  +\log 2. $$

\end{proof}
\normalcolor

\section{A bound for the height of the discriminant of a $m$-division field}

\bigskip\noindent For every extension $L/K$ of number fields, we denote by $D_{L/K}$ its discriminant. We recall that given a tower of extensions $L\subseteq F\subseteq K$,
the discriminant of $L/K$ is equal to

\begin{equation} \label{disc} D_{L/K}=D_{F/K}^{[L:F]}N_{F/K}(D_{L/F}).
\end{equation}

\bigskip\noindent (see for instance  \cite{Mar} or \cite{BLSS}). If $L=EF$ is the compositum of two fields
linearly disjoint over $K$, then

\begin{equation} \label{disc2} D_{L/K}=D_{E/K}^{[F:K]}D_{F/K}^{[E:K]}
\end{equation} 

\bigskip\noindent (see for instance \cite{Neu}). 

\begin{rem} \label{rem} 
Obviously we can rewrite equation  \eqref{disc} as

$$  D_{L/K}=D_{F/K}^{[L:F]}\prod_{\sigma\in \Gal(F/K)}\sigma(D_{L/F}).$$

\noindent Consider the logarithmic height of $|D_{L/K}|$ in the last equation. By
the properties recalled in Proposition \ref{height}, we have

\begin{equation} \label{disc_final}  h(D_{L/K})\leq [L:F]h(D_{F/K})+[F:K]h(D_{L/F}).
\end{equation}

\noindent  
On the other hand, if we consider the logarithmic height of $D_{E/K}$ in equation  \eqref{disc2}, then
we get

\begin{equation} \label{lin_disj}  h(D_{E/K})\leq [F:K]h(D_{E/K})+[E:K]h(D_{F/K}). \end{equation}

\noindent Since $E$ and $ F$ are linearly disjoint over $K$, then both
$[L:F]=[EF:F]=[E:K]$ and $D_{L/F}=D_{E/K}$. Therefore \eqref{lin_disj}
 is nothing but  \eqref{disc_final} again

$$  h(D_{E/K})\leq [F:K]h(D_{L/F})+[L:F]h(D_{F/K}).$$

\noindent Then, when we calculate the height of the discriminant on an extension $EF/K$, we can assume without loss of generality that $E$ and $F$ are linearly disjoint over $K$.
\end{rem}

\bigskip\noindent We also recall that $disc(f_1(x))$ divides $disc(f_1(x)f_2(x))$,
for every $f_1(x), f_2(x)\in K[x]$. From here on out we will consider the extension $L/K$ with  
$L=K(\E[m])$. We are going to give a bound to the height of its discriminant. 
Such a computation can be obtained by using one of the generating systems of $K(\E[m])/K$ mentioned above. 
Anyway, it turns out that the orders of the bounds got by using different generating systems 
are similar and then there is no much improvement in changing the generating system in this case. 
For this reason we are going to use the classical
generating system $\{x_1,x_2,y_1,y_2\}$ which has the advantage of holding for all $m$. 

\begin{thm} \label{DL_1}
Let $\E: y^2=x^3+bx+c$ be an elliptic curve defined over a number field $K$. Consider the extension
$L:=K(\E[m])$, where $m\geq 3 $ is a positive integer. Then

\begin{equation} \label{all_m} h(D_{L/K})\leq \left\{ \begin{array}{ll}  
5(m^2-1)^3(m^2-3)(\log m+h(b)+h(c)), & \textrm{if } m\geq 3  \textrm{ is odd; } \\
& \\
5(m^2-4)^3(m^2-6)(\log m+h(b)+h(c)),  & \textrm{if } m\geq 4  \textrm{ is even. } 
\end{array}
\right. 
\end{equation}
\end{thm}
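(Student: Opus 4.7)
My plan is to build the tower
$$K \subset F_1 := K(x_1) \subset F_2 := K(x_1,y_1) \subset F_3 := K(x_1,y_1,x_2) \subset F_4 := L,$$
and apply the inequality \eqref{disc_final} three times, once for each intermediate field. This reduces the bound on $h(D_{L/K})$ to a weighted sum of four terms $h(D_{F_i/F_{i-1}})$, where each weight is a product of two of the degrees $[F_j:K]$ and $[L:F_j]$. Using the remark (just before the statement) that $D_{F(\alpha)/F}$ divides the discriminant of the minimal polynomial $f_\alpha$ of $\alpha$ over $F$, it suffices to bound $h(\mathrm{disc}(f_{\alpha_i}))$ at each step, where $\alpha_i \in \{x_1,y_1,x_2,y_2\}$.

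For each generator I would use the following bound: if $f_\alpha(x)=\prod_{k=1}^{n}(x-\alpha_k)$ is monic of degree $n$, then Galois conjugates share the same logarithmic height, so Proposition~\ref{height} gives
$$h(\mathrm{disc}(f_\alpha)) \leq 2\sum_{k<\ell} h(\alpha_k-\alpha_\ell) \leq n(n-1)\bigl(2h(\alpha)+\log 2\bigr).$$
For $\alpha\in\{x_1,x_2\}$ we have $n\leq \deg \Psi_m$ and the Lemma at the end of Section~\ref{sec3} bounds $h(\alpha)$ explicitly. For $\alpha\in\{y_1,y_2\}$ the relation $y_i^2=x_i^3+bx_i+c$ forces $n\leq 2$ and, via Proposition~\ref{height}, $h(y_i)\leq \tfrac{1}{2}\bigl(3h(x_i)+h(b)+h(c)+\log 2\bigr)$, so the ordinate steps contribute a much smaller amount than the abscissa steps.

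The last step is purely computational: with $\deg \Psi_m=(m^2-1)/2$ (odd $m$), the factor $n(n-1)$ at the abscissa step is $(m^2-1)(m^2-3)/4$; multiplying by the $[F_j:K]\leq (m^2-1)^2/2$ factors coming from the tower formula, and by the explicit height $h(x_i) \leq (m^2-1)^2\log m+\tfrac{m^2-1}{2}(h(b)+h(c))+\log 2$, gives a leading term of order $(m^2-1)^3(m^2-3)(\log m+h(b)+h(c))$. All additive corrections (the $\log 2$ and $\log \pi$ terms, the two ordinate contributions, and the discrepancy between $\log m$ and the various constants appearing in the height of $\Psi_m$) can be absorbed at the cost of a constant factor, which a careful computation shows is at most $5$; the even case is analogous after replacing $m^2-1$ by $m^2-4$ and $m^2-3$ by $m^2-6$.

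The main obstacle will be the bookkeeping needed to guarantee that the final constant is as small as~$5$: one must apply the linear disjointness observation in Remark~\ref{rem} to prevent the degrees from multiplying unnecessarily, use the degree-$2$ nature of the ordinate extensions so that they do not dominate, and carefully absorb all additive terms into the leading $(m^2-1)^3(m^2-3)\log m$ contribution. No new deep idea seems required; everything follows from the height bounds already established and from the classical tower and compositum formulas for the discriminant.
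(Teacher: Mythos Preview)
Your tower strategy and use of \eqref{disc_final} are exactly what the paper does, but your bound on the abscissa discriminants is too crude to reach the stated constant~$5$, and in fact it misses by a polynomial factor in~$m$.

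The issue is the inequality
\[
h(\mathrm{disc}(f_{x_i})) \;\leq\; n(n-1)\bigl(2h(x_i)+\log 2\bigr).
\]
With $n\leq (m^2-1)/2$ and $h(x_i)\leq (m^2-1)^2\log m+\tfrac{m^2-1}{2}(h(b)+h(c))+\log 2$ from the Lemma in Section~\ref{sec3}, this already gives
\[
h(D_{F_1/K}) \;\lesssim\; \tfrac{(m^2-1)(m^2-3)}{4}\cdot 2(m^2-1)^2\log m \;\sim\; \tfrac12(m^2-1)^3(m^2-3)\log m,
\]
which is the full size of the target bound \emph{before} you multiply by any tower weight. Once you multiply by $[L:F_1]$ (which is of order $m^2$), the first abscissa term alone is of order $(m^2-1)^3(m^2-3)^2\log m$, an extra factor of roughly $m^2-3$ too large. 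Your sentence ``gives a leading term of order $(m^2-1)^3(m^2-3)(\log m+h(b)+h(c))$'' does not follow from the factors you list; if you multiply them out you will see the mismatch.

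What the paper does instead at the abscissa steps is to invoke Schmidt's exact formula
\[
\mathrm{Disc}(\Psi_m)=\pm\, m^{(m^2-3)/2}\,\Delta^{(m^2-1)(m^2-3)/24}
\]
(for $m$ odd, and the analogous formula for $m$ even). This gives $h(\mathrm{Disc}(\Psi_m))=O\bigl((m^2-1)(m^2-3)\bigr)$ rather than $O\bigl((m^2-1)^3(m^2-3)\bigr)$, a saving of a factor $(m^2-1)^2$. With this input the abscissa contribution to $h(D_{L/K})$ becomes subdominant, and the leading term $5(m^2-1)^3(m^2-3)(\log m+h(b)+h(c))$ is governed by the \emph{ordinate} steps, where $D_{L/F_1F_2}$ divides $2^8(x_1^3+bx_1+c)^2(x_2^3+bx_2+c)^2$ and one must take a norm down from $K(x_1,x_2)$. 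So the missing idea is Schmidt's discriminant formula; without it (or an equivalent saving) the outline cannot produce the bound with constant~$5$, only a bound of shape $C(m^2-1)^3(m^2-3)^2(\log m+h(b)+h(c))$.
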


\begin{proof} As in the previous sections, let $P_1=(x_1,y_1)$ and $P_2=(x_2,y_2)$ be a generating
set for $\E[m]$. Then $K(\E[m])=K(x_1,x_2,y_1,y_2)$.  To extimate $|D_{L/K}|$, we use the equalities \eqref{disc} and \eqref{disc2}.
We denote by $F_1$ and $F_2$ the fields $K(x_1)$ and respectively $K(x_2)$.  By $F_3$ 
we denote the field $K(x_1,x_2, y_1)=F_1F_2(y_1)$. First of all we have to bound the discriminant of the extension $F_1/K$.
Since this extension is monogeneous, we have to bound the discriminant of the minimal polynomial of $x_1$, which divides the $m$-th division
polynomial $\Psi_m$ of $\E$. Let $Disc(\Psi_m)$ be the discriminant of $\Psi_m$ and
let  $\Delta=-16(4b^3+27c^2)$ be the discriminant of $\E$. In \cite{Sch}, where $\Psi_m$ is denoted by $B_n^*$,
Schmidt shows

$$Disc(\Psi_m)=\left\{ \begin{array}{ll}  
 (-1)^{\frac{m-1}{2}} m^{\frac{m^2-3}{2}}\Delta^{\frac{(m^2-1)(m^2-3)}{24}}, & \textrm{if } m  \textrm{ is odd; } \\
 (-1)^{\frac{m-2}{2}}m^{\frac{m^2}{2}}2^{2-m^2}\Delta^{\frac{m^2(m^2+2)}{24}},  & \textrm{if } m  \textrm{ is even. } 
\end{array}
\right. $$

\noindent (see also \cite{Sta}). Thus 

\begin{equation} \label{discF1} D_{F_1/K} \leq \left\{ \begin{array}{ll}  
 (-1)^{\frac{m-1}{2}} m^{\frac{m^2-3}{2}}\Delta^{\frac{(m^2-1)(m^2-3)}{24}}, & \textrm{if } m  \textrm{ is odd; } \\
 (-1)^{\frac{m-2}{2}}m^{\frac{m^2}{2}}2^{2-m^2}\Delta^{\frac{m^2(m^2+2)}{24}},  & \textrm{if } m  \textrm{ is even. } 
\end{array}
\right. \end{equation}

\noindent Similarly, we have 

\begin{equation} \label{discF2} D_{F_2/K} \leq \left\{ \begin{array}{ll}  
 (-1)^{\frac{m-1}{2}} m^{\frac{m^2-3}{2}}\Delta^{\frac{(m^2-1)(m^2-3)}{24}}, & \textrm{if } m  \textrm{ is odd; } \\
 (-1)^{\frac{m-2}{2}}m^{\frac{m^2}{2}}2^{2-m^2}\Delta^{\frac{m^2(m^2+2)}{24}},  & \textrm{if } m  \textrm{ is even. } 
\end{array}
\right. \end{equation}

\noindent Since we aim to bound the height of the discriminant of the extension $F_1F_2/K$, 
by Remark \ref{rem}, we can assume, without loss of generality that $F_1$ and $F_2$ are linearly disjoint over $K$
and that

$$D_{F_1F_2/K}=D_{F_1/K}^{[F_2:K]}D_{F_2/K}^{[F_1:K]}.$$

\noindent  Both $[F_1:K]$ and $[F_2:K]$ are less or equal than $\deg(\Psi_m)$. As in the statement of Proposition \ref{mckee_prop}, we have

$$\deg(\Psi_m)=\left\{\begin{array}{l} 
 \dfrac{m^2-1}{2} \textrm{ if } m \textrm{ is odd};\\
\\
 \dfrac{m^2-4}{2} \textrm{ if } m \textrm{ is even}.\\
\end{array}\right.$$

\noindent By \eqref{discF1} and \eqref{discF2}, we get

\[ D_{F_1F_2/K} \leq \left\{ \begin{array}{lll}  
\left[(-1)^{\frac{m-1}{2}} m^{\frac{m^2-3}{2}}\Delta^{\frac{(m^2-1)(m^2-3)}{24}}\right]^{m^2-1} & \hspace{0.1cm} \textrm{if } m   \textrm{ is odd; }\\ 
\left[(-1)^{\frac{m-2}{2}}m^{\frac{m^2}{2}}2^{2-m^2}\Delta^{\frac{m^2(m^2+2)}{24}}\right]^{m^2-4}  & \hspace{0.1cm}\textrm{if } m  \textrm{ is even. } \\
\end{array}
\right. 
\]

\noindent Thus

$$h(D_{F_1F_2/K}) \leq \left\{ \begin{array}{lll}  
\dfrac{m^2-1}{2} (h(D_{F_1/K})+h(D_{F_2/K})), & \textrm{if } m   \textrm{ is odd; }\\ 
\\
\dfrac{m^2-4}{2} (h(D_{F_1/K})+h(D_{F_2/K})),  & \textrm{if } m  \textrm{ is even; } \\
\end{array}
\right. $$

\noindent and  

\begin{equation} \label{disj}
h(D_{F_1F_2/K})\leq \left\{ \begin{array}{lll}  
 (m^2-1)\left({\dfrac{m^2-3}{2}}\log m+{\dfrac{(m^2-1)(m^2-3)}{24}}h(\Delta)\right), & \textrm{if } m  \textrm{ is odd; } \\
\\
 (m^2-4)\left({\dfrac{m^2}{2}}\log m+{(m^2-2)}\log 2 +{\dfrac{m^2(m^2+2)}{24}}h(\Delta)\right),  & \textrm{if } m  \textrm{ is even. } \\
\end{array}
\right.
\end{equation}

\noindent We must extimate the norm of $D_{F_3/F_1F_2}$ and the norm
of $D_{L/F_1F_2}$. Since $F_3$ contains $x_1$, then $[F_3:F_1F_2]\leq 2$. 
If $F_3\neq F_1F_2$, then a basis of $F_3/F_1F_2$ is ${\mathfrak{B}}=\{1,y_1\}$. The Galois group $\Gal(F_3/F_1F_2)$
is generated by the automorphism of the $m$-torsion points of $\E$ sending $y_1$ to $-y_1$
(and fixing $x_1$). Therefore

\begin{equation} \label{disc_y} Disc({\mathfrak{B}})=\left [\det \left( \begin{array}{cc}
1 & y_1 \\
1 & -y_1 \\ 
\end{array} \right) \right]^2=(2y_1)^2=4y_1^2=4(x_1^3+bx_1+c).
\end{equation} 

\noindent  Similarly the discriminant of a basis of the extension  $F_1F_2(y_2)$ is $4(x_2^3+bx_2+c)$.
Because of Remark \ref{rem} again, we can assume without loss of generality that $F_3$ and $F_1F_2(y_2)$ are linearly disjoint over $F_1F_2$. 
Then by the equality \eqref{disc2} we have $D_{L/F_1F_2}\leq 2^8(x_1^3+bx_1+c)^2(x_2^3+bx_2+c)^2$. By the equality \eqref{disc}, there is such a relation between the
discriminant of $L/K$ and the ones of $F_1F_2/K$ and of $L/F_1F_2$

$$D_{L/K}=D_{F_1F_2/K}^{[L:F_1F_2]}N_{F_1F_2/K}(D_{L/F_1F_2}).$$

\noindent We have

$$N_{F_1F_2/K}(D_{L/F_1F_2})=\prod_{\sigma\in \Gal(F_1F_2/K)} 2^8\sigma(x_1^3+bx_1+c)^2\sigma(x_2^3+bx_2+c)^2.$$

\noindent Since both $x_1$ and $x_2$ are roots of $\Psi_m$, then  $[F_1F_2:K] \leq \deg \Psi_m(\deg \Psi_m-1)$, i.e.

$$[F_1F_2:K] \leq  \left\{ \begin{array}{lll}  
\ddfrac{(m^2-1)(m^2-3)}{4}, & \textrm{if } m   \textrm{ is odd; }\\ 
\\
\ddfrac{(m^2-4)(m^2-6)}{4},  & \textrm{if } m  \textrm{ is even. } \\
\end{array}
\right. $$

\noindent For all odd $m\geq 3$, by Proposition \ref{height} we get

\[
\begin{array}{lll} h(N_{F_1F_2/K}(D_{L/F_1F_2}))   & \leq
[F_1F_2:K]  [  8\log 2+ 2h(x_1^3+bx_1+c)  +2h(x_2^3+bx_2+c)]\\
& \leq [F_1F_2:K]  [  8\log 2+ 2 ( 3 h(x_1) + h(x_1) +h(b)+h(c) +\log3 )  \\
& \hspace{3cm} + 2 (3 h(x_2) + h(x_2) +h(b)+h(c) +\log 3 ) ]. \\
\end{array}
\]

\noindent Then
\begin{equation} \label{both}
 h(N_{F_1F_2/K}(D_{L/F_1F_2}))    \leq \left\{\begin{array}{lll} 
   \ddfrac{(m^2-1)(m^2-3)}{4}[  8 \log 2+  8 h(x_1)  +8 h(x_2)&\\
   \hspace{3cm} +4h(b)+ 4h(c) +4\log 3], & \textrm{ if } m\geq 3 \textrm{ is odd};\\
  \ddfrac{(m^2-4)(m^2-6)}{4}[  8 \log 2+  8 h(x_1)  +8 h(x_2) &\\
   \hspace{3cm}  +4h(b)+ 4h(c)+4\log 3], & \textrm{ if } m\geq 3 \textrm{ is even}. \\
\end{array} \right.
\end{equation}

\noindent   We are going to treat separately the case when $m$ is odd and the case when $m$ is even. Firstly assume $m\geq 3 $ odd. 
By inequality \eqref{last}, for $i=1, 2,$ we get

\[
\begin{split} h(N_{F_1F_2/K}(D_{L/F_1F_2}) )  \leq & \ddfrac{(m^2-1)(m^2-3)}{4}\bigg[8\log 2+  16 \bigg( (m^2-1)^2\log m + \\
& \left.\frac{m^2-1}{2}(h(b) +h(c))  +\log 2\bigg) +4h(b) + 4h(c) +4\log 3\right] \\
 = & (m^2-1)(m^2-3)\bigg[2\log 2 +4 \bigg( (m^2-1)^2\log m +\\
&\frac{m^2-1}{2}\left(h(b) +h(c)\right)  +\log 2\bigg) +h(b)+h(c) +\log 3\bigg].\\ 
\end{split}
\]

\noindent In order to give a more elegant bound, we observe that $\log 2\leq \log m$ and $\log 3\leq \log m$, 
for every $m\geq 3$.  Then

\[
\begin{split} h(N_{F_1F_2/K}(D_{L/F_1F_2}) )  \leq &(m^2-1)(m^2-3)\bigg[2\log m +4 \bigg( (m^2-1)^2\log m \\
&+\frac{m^2-1}{2}\left(h(b) +h(c)\right)  +\log m\bigg) +h(b)+h(c) +\log m\bigg]\\ 
=& (m^2-1)(m^2-3)\bigg[(4(m^2-1)^2+7)\log m +(2m^2-1)\left(h(b) +h(c)\right) \bigg].\\ 
\end{split}
\]

\noindent In addition $4(m^2-1)^2+7\leq \dfrac{9}{2}(m^2-1)^2$ and $2m^2-1\leq \dfrac{9}{2}(m^2-1)^2$. Thus

\begin{equation} \label{eleg}
\begin{split}h(N_{F_1F_2/\QQ}(D_{L/F_1F_2})) & \leq  \dfrac{9}{2}(m^2-1)^3(m^2-3)(\log m +h(b) +h(c)). \\
\end{split}
\end{equation}

\noindent Putting together all those considerations and using equation \eqref{disc}, we deduce, for $m\geq 3$ odd,

\[ \begin{split}
h(D_{L/K}) \leq & 4\frac{(m^2-1)(m^2-3)}{2} \log m + 4 {\frac{(m^2-1)^2(m^2-3)}{24}}h(\Delta) \\
&+  \dfrac{9}{2}(m^2-1)^3(m^2-3)(\log m +h(b) +h(c))\\
 = & 2(m^2-1)(m^2-3) \log m +  {\frac{(m^2-1)^2(m^2-3)}{6}}h(\Delta) \\
&+  \dfrac{9}{2}(m^2-1)^3(m^2-3)(\log m +h(b) +h(c)).\\
\end{split} \]

\noindent Observe that 

\begin{equation} \label{delta} \begin{split}   h(\Delta) \leq & 6\log 2+3h(b)+3\log 3 +2h(c) +\log 2\\
& = 7\log 2+3\log 3 +  3h(b) +2h(c)\\
& \leq 10\log m+ 3h(b) +2h(c).\\
\end{split} \end{equation} 

\noindent Thus

\[\begin{split}
h(D_{L/K}) 
\leq &  2(m^2-1)(m^2-3) \log m + {\frac{(m^2-1)^2(m^2-3)}{6}}(10\log m+ 3h(b) +2h(c))\\
&+  \dfrac{9}{2}(m^2-1)^3(m^2-3)(\log m +h(b) +h(c))\\
=&   (m^2-1)(m^2-3) \left( \left(2+ \frac{5}{3}(m^2-1)\right)\log m + {\frac{m^2-1}{2}}h(b) +\frac{m^2-1}{3}h(c)\right)\\
&+  \dfrac{9}{2}(m^2-1)^3(m^2-3)(\log m +h(b) +h(c))\\
=&   (m^2-1)(m^2-3) \left( \frac{5m^2+5}{3}\log m + {\frac{m^2-1}{2}}h(b) +\frac{m^2-1}{3}h(c)\right)\\
&  +\dfrac{9}{2}(m^2-1)^3(m^2-3)(\log m +h(b) +h(c))\\
=&  (m^2-1)(m^2-3) \left[ \left(\frac{5m^2+5}{3}+\dfrac{9}{2}(m^2-1)^2\right)\log m \right.\\
&\left. +\left(\frac{m^2-1}{2}+\dfrac{9}{2}(m^2-1)^2\right)h(b)+
\left(\frac{m^2-1}{3}+\dfrac{9}{2}(m^2-1)^2\right)h(c).\right]
\end{split} \]

\noindent Observe that $\left(\dfrac{5m^2+5}{3}+\dfrac{9}{2}(m^2-1)^2\right)\leq 5(m^2-1)^2$,  for all $m\geq 3$. 
Moreover $\left(\dfrac{m^2-1}{2}+\dfrac{9}{2}(m^2-1)^2\right)\leq 5(m^2-1)^2$ and $\left(\dfrac{m^2-1}{3}+\dfrac{9}{2}(m^2-1)^2\right)\leq 5(m^2-1)^2$, for all $m\geq 3$. Then 

\[
\begin{split}
h(D_{L/K}) 
 & \leq  5(m^2-1)^3(m^2-3)(\log m+h(b)+h(c)).\\
\end{split}
\]

\noindent Assume that $m\geq 4$ is even. By equation \eqref{both} and equation \eqref{last}, we get

\[
\begin{split} h(N_{F_1F_2/K}(D_{L/F_1F_2}) )  \leq & \ddfrac{(m^2-4)(m^2-6)}{4}\left[8\log 2+  16 \left( (m^2-4)^2\log m +\right. \right.\\
& \frac{m^2-4}{2}(h(b) +h(c))  +\log 2) +4h(b) \left.+ 4h(c) +4\log 3\right] \\
=& 
(m^2-4)(m^2-6)\bigg[2\log 2 +4 \bigg( (m^2-4)^2\log m +\\
&\frac{m^2-4}{2}\left(h(b) +h(c)\right)  +\log 2\bigg) +h(b)+h(c) +\log 3\bigg].\\ 
\end{split}
\]

\noindent Again, in order to give a more elegant bound, 
we use that $\log 2\leq \log m$ and $\log 3\leq \log m$, 
for every $m\geq 4$. Then

\[
\begin{split} h(N_{F_1F_2/K}(D_{L/F_1F_2}) ) \leq &
(m^2-4)(m^2-6)\bigg[2\log m +4 \bigg( (m^2-4)^2\log m +\\
&\frac{m^2-4}{2}\left(h(b) +h(c)\right)  +\log m\bigg) +h(b)+h(c) +\log m\bigg].\\ 
= &  (m^2-4)(m^2-6)\bigg[(4(m^2-4)^2+7)\log m +(2m^2-7)\left(h(b) +h(c)\right) \bigg]\\ 
\end{split}
\]

\noindent We have $4(m^2-4)^2+7\leq \dfrac{9}{2}(m^2-4)^2$ and $2m^2-7\leq \dfrac{9}{2}(m^2-4)^2$. Thus

\begin{equation} \label{eleg_2}
\begin{split}h(N_{F_1F_2/\QQ}(D_{L/F_1F_2})) & \leq  \dfrac{9}{2}(m^2-4)^3(m^2-6)(\log m +h(b) +h(c)). \\
\end{split}
\end{equation}
We can repeat the same arguments as above,
by using the bound \eqref{disj} for $m$ even, i.e.

\begin{equation} \label{last_eq_2} 
h(D_{F_1F_2/K}) \leq  (m^2-4)\left[{\frac{m^2}{2}}\log m+{(m^2-2)}\log 2 +{\frac{m^2(m^2+2)}{24}}h(\Delta)\right].
\end{equation}

\noindent By equation \eqref{disc}, we get

\[
\begin{split}
h(D_{L/K})  \leq & 4(m^2-4)\left[{\frac{m^2}{2}}\log m+{(m^2-2)}\log 2 +{\frac{m^2(m^2+2)}{24}}h(\Delta)\right] \\
&+ \dfrac{9}{2}(m^2-4)^3(m^2-6)(\log m +h(b) +h(c)).\\
\end{split} 
\]

\noindent We use again $\log 2\leq \log m$. Then

\[\begin{split}
h(D_{L/K})  \leq & 4(m^2-4)\left[{\frac{m^2}{2}}\log m+ {(m^2-2)}\log m  +{\frac{m^2(m^2+2)}{24}}h(\Delta)\right] \\
&+ \dfrac{9}{2}(m^2-4)^3(m^2-6)(\log m +h(b) +h(c))\\
 = & 4(m^2-4)\left[\ddfrac{3m^2-4}{2}\log m  +{\frac{m^2(m^2+2)}{24}}h(\Delta)\right] \\
&+ \dfrac{9}{2}(m^2-4)^3(m^2-6)(\log m +h(b) +h(c))\\
=  & (m^2-4)\left[(6m^2-8)\log m  +{\frac{m^2(m^2+2)}{6}}h(\Delta)\right] \\
&+ \dfrac{9}{2}(m^2-4)^3(m^2-6)(\log m +h(b) +h(c)).\\
\end{split} \]

\noindent Because of equation \eqref{delta}, i.e. $h(\Delta)\leq 10\log m+3 h(b)+2h(c)$, we get

\[\begin{split}
h(D_{L/K}) \leq & (m^2-4)\left[(6m^2-8)\log m  +{\frac{(m^2+2)m^2}{6}}(10\log m+3 h(b)+2h(c))\right] \\
&+\dfrac{9}{2}(m^2-4)^3(m^2-6)(\log m +h(b) +h(c))\\
 = & \bigg[ \left(6m^2-8+\frac{5}{3}(m^2+2)m^2\right)(m^2-4)+\dfrac{9}{2}(m^2-4)^3(m^2-6)\bigg]\log m\\
& +\left( {\frac{m^2+2}{2}}(m^2-4)m^2+\dfrac{9}{2}(m^2-4)^3(m^2-6)\right)h(b)\\
& +\left( {\frac{m^2+2}{3}}(m^2-4)m^2+\dfrac{9}{2}(m^2-4)^3(m^2-6)\right)h(c).\\
\end{split} \]

\noindent We have $\left(6m^2-8+\dfrac{5}{3}(m^2+2)m^2\right)(m^2-4)+\dfrac{9}{2}(m^2-4)^3(m^2-6)\leq 5(m^2-4)^3(m^2-6)$, for all
 $m\geq 4$. In addition ${\dfrac{m^2+2}{2}}(m^2-1)m^2 +\dfrac{9}{2}(m^2-4)^3(m^2-6)\leq 5(m^2-4)^3(m^2-6)$ and ${\dfrac{m^2+2}{3}}(m^2-1)m^2+\dfrac{9}{2}(m^2-4)^3(m^2-6) \leq 5(m^2-4)^3(m-6)$, for all
$m\geq 4$. Then 

\[ \begin{split}
h(D_{L/K}) \leq  & 
 5(m^2-4)^3(m-6)(\log m+h(b) +h(c)).\\
\end{split}
\]

\end{proof}

\noindent    We will use the bounds \eqref{all_m}
 in the next section to give an explicit effective version of the hypotheses of Problem \ref{prob1},
in elliptic curves over $\QQ$.

\begin{rem} \label{rem_eleg} 
As recalled in Section \ref{sec3}, if $b=\ddfrac{b_1}{b_2}$, $c=\ddfrac{c_1}{c_2}$, with 
$b_1, c_1\in \ZZ$, $b_2, c_2\in \ZZ\setminus \{0\}$, $\gcd(b_1,b_2)=1$ and $\gcd(c_1,c_2)=1$, 
then $h(b)=\log^+\max\{|b_1|,|b_2|\}$ and $h(c)=\log^+\max\{|c_1|,|c_2|\}$.
If $b,c\in \ZZ$, then $h(b)=\log^+|b|$ and $h(c)=\log^+|c|$. In particular, if
 $b,c\in \ZZ\setminus\{0\}$, then 
$h(b)+h(c)= \log |bc|$. 
In this last case the bound \eqref{all_m} can also be expressed in the following more elegant form

$$|h(D_{L/K})|\leq  \left\{ \begin{array}{ll}  
(m^2-1)^3(m-3)\log m^{5}|bc|, & \textrm{if } m\geq 3  \textrm{ is odd; } \\
& \\
(m^2-4)^3(m-6)\log m^{5}|bc|,  & \textrm{if } m\geq 4  \textrm{ is even. } 
\end{array}
\right. $$

\end{rem}

\section{An explicit effective version of the hypotheses of Problem \ref{prob1}} \label{sec1}

\bigskip\noindent We briefly recall the definition of the cohomology group which gives an obstruction
to the validity of the Hasse principle for divisibility of points in $\E/K$.
Let $P\in \E(K)$ and let $D\in \E(\overline{K})$ be a $m$-divisor of
$P$, i.e. $P=mD$. For every $\sigma\in G={\Gal}(K(\E[m])/K)$, we have

$$m\sigma(D)=\sigma(mD)=\sigma(P)=P.$$

\noindent Thus $\sigma(D)$ and $D$ differ by a point in $\E[m]$ 
and one can define a cocycle $\{Z_{\sigma}\}_{\sigma\in G}$ of $G$ with
values in $\E[m]$ by

\begin{equation} \label{eq1} Z_{\sigma}:=\sigma(D)-D. \end{equation} 

\noindent Such a cocycle vanishes in $H^1(G,\E[m])$, if and only if there exists a $K$-rational
$m$-divisor of $P$  \cite{DZ}. In particular, the hypotheses
about the validity of the local-divisibility in Problem \ref{prob1}, assuring the existence of a $K_v$-rational
$m$-divisor  of $P$, imply that
the cocycle $\{ Z_{\sigma}\}_{\sigma\in G}$ vanishes in  $H^1({\Gal}((K(\E[m]))_w/K_v),\E[m])$,
for all but finitely many $v\in M_K$, where $w$ denotes a place of $K(\E[m])$ extending $v$. 
This fact motivates the following definition \eqref{h1loc}, given by Dvornicich and Zannier in \cite{DZ}, of a subgroup 
of $H^1(G,\E[m])$ which encodes the hypotheses of the
problem in this cohomological context and 
gives an obstruction to the validity of this Hasse principle  \cite{DZ3}.
Let $G_v$ denote the group ${\Gal}((K(\E[m]))_w/K_v)$
and let $\Sigma$ be the subset of $M_K$ containing all the $v\in M_K$, that are
unramified in $K(\E[m])$. Then

\begin{equation} \label{h1loc}
H^1_{\textrm{\loc}}(G,\E[m]):=\bigcap_{v\in \Sigma} (\ker  H^1(G,\E[m])\xrightarrow{\makebox[1cm]{{\small $res_v$}}} H^1(G_v,\E[m])),
\end{equation}

\noindent where $res_v$ is the usual restriction map. 

\noindent Since every $v\in \Sigma$ is unramified in $K(\E[m])$,  then $G_v$ is a cyclic subgroup of $G$, for all $v\in \Sigma$.
By the Chebotarev Density Theorem, the local Galois group $G_v$ varies over \emph{all} cyclic subgroups of
$G$ as $v$ varies in $\Sigma$ (see Theorem \ref{Cheb} and Theorem \ref{Lan} below for further details). 
Then we have the following equivalent definition of the group $H^1_{\textrm{\loc}}(G,\E[m])$.

\begin{D} 
 A cocycle $\{Z_{\sigma}\}_{\sigma\in G}\in H^1(G,\E[m])$ satisfies the
\emph{local conditions} if, for every $\sigma\in G$, there exists $A_{\sigma}\in \E[m]$ such that
$Z_{\sigma}=(\sigma-1)A_{\sigma}$. The subgroup of $H^1(G,\E[m])$ formed by all the cocycles satisfying the local conditions
is the \emph{first local cohomology group} $H^1_{\loc}(G,\E[m])$.
\end{D}

\noindent The triviality of $H^1_{\textrm{\loc}}(G,\E[m])$ assures the validity of the local-global divisibility by $m$ 
in $\E$ over $K$ \cite[Proposition 2.1]{DZ}. The condition $H^1_{\loc}(G,\E[m])=0$ is also necessary, not exactly over $K$, but over
a finite extension of $K$  \cite[Theorem 3]{DZ3}. So the nontriviality of the first cohomology
group $H^1_{\loc}(G,\E[m])$ is an obstruction to the validity of the Hasse principle.
\par As stated above, the validity of the Hasse principle for divisibility of points
in $\E$ has been proved for many integers $m$. Anyway in all the other papers (of various authors) about this topic, there
is no information about the minimal number of places $v$ for which the validity of the local
divisibility 
over $K_v$ is sufficient to have the global divisibility 
in $\E$ over $K$, for a general $m$.
Only when $m=5$ and $\E$ is an elliptic curve with Weierstrass form $y^2=x^3+bx$ or $y^2=x^3+c$, with $b,c\in \QQ$, minimal bounds were produced in \cite{P_CM}. 
 For the first time, here we show for every $m\geq 3$, an explicit upper bound  to the number of places $v$ for which the validity of the local
divisibility by $m$ implies the global one in all the cases when the Hasse principle for divisibility holds, in elliptic curves
defined over $\QQ$. As recalled in Section \ref{sec0}, this in particular happens in all $\E$ over $\QQ$, for every $m$ not divisible by any power $p^n$, with $p\in \{2,3\}$ and $n\geq 2$, but there are also examples of elliptic curves $\E$ over $\QQ$
for which the local-global principle for divisibility holds when $m$ is divisible by powers of 2 or 3 (with $n\geq 2$).

  We have already
mentioned that by the Chebotarev Density Theorem, the group $G_v$ varies over all
the cyclic subgroups of $G$, as $v$ varies among all the places of $K$, that are unramified in $K(\E[m])$.
Therefore in fact we have

$$H^1_{\loc}(G,\E[m])=\bigcap_{v\in S} (\ker  H^1(G,\E[m])\xrightarrow{\makebox[1cm]{{\small $res_v$}}} H^1(G_v,\E[m])),$$

\noindent where $S$ is a subset of $\Sigma$ such that $G_v$ varies
over all cyclic subgroups of $G$ as $v$ varies in $S$.
If we are able to find such a set $S$, then we can replace the hypotheses of Problem \ref{prob1} about the
validity of the local divisibility for all but finitely many $v\in M_K$ with the assumption of the validity of the local 
divisibility for every $v\in S$. Observe that in particular $S$ is finite (on the contrary $\Sigma$ is not finite). So it suffices to have that
the local divisibility by $m$ holds for a finite number of suitable places to get the global divisibility by $m$.
Of course $S$ varies with respect to $m$ and $\E$. In \cite{P_CM} the minimal possible cardinality of the set $S$ is showed when
$m=5$ and $\E$ is an elliptic curve with Weierstrass form $y^2=x^3+bx$ or $y^2=x^3+c$, with $b,c\in \QQ$.   
In particular the maximum number $N$ of cyclic subgroups of $\Gal(K(\E[5])/K)$ is calculated. 
Then $N$ is used as a lower bound for the cardinality of $S$. In principle, for every $m$, we can have a set $S$, as above, with cardinality $N$ equal to the maximum number of cyclic subgroups of $\Gal(K(\E[m])/K)$. Anyway
it is not immediate to define a proper set $S$, by using only this information about its cardinality. 
In fact, we are not sure that the validity of the local
divisibility for the first $N$ rational primes, implies the validity of the global divisibility. 
A priori two different primes among the first $N$ ones, can correspond to the same
cyclic Galois group $G_v$.  
For a general $\E: y=x^3+bx+c$ and for every $m\geq 3$, we are going to show that we can take

\begin{equation} \label{S} S=\{v\in M_K\setminus\{\infty\}| h(N_{K/\QQ}(v))\leq 12577 \cdot B(m,b,c)\}\setminus S', \end{equation}

\noindent where $B(m,b,c)$ is the bound for $h(D_{L/K})$ appearing in Theorem \ref{DL_1} (respectively for $m\geq 3$ odd and
$m\geq 4$ even) and $S'$ is
a subset of 
$$\{v\in M_K\setminus\{\infty\}| h(N_{K/\QQ}(v))\leq 12577 \cdot B(m,b,c)\}$$ 

\noindent with cardinality $|S'| < \ddfrac{1}{|G|}.$ In particular, we are going to see that for every cyclic subgroup $C$ of $G$, there exists a place $v$ with $h(N_{K/\QQ}(v))\leq 12577\cdot B(m,b,c)$, such that $G_v=C$.

\begin{thm}[Chebotarev Density Theorem, 1926] \label{Cheb} Let $L/K$ be a finite Galois extension.
For every prime $v$ of $K$, unramified in $L$, let $\left(\ddfrac{L|K}{v}\right)$
denote the Artin symbol of $v$. For every conjugacy class $C$ in $\Gal(L/K)$
the density of the primes $v$ such that  $\left(\ddfrac{L|K}{v}\right)=C$
is $\ddfrac{\#C}{\#\Gal(L/K)}$. 
\end{thm}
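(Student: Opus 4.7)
The plan is to follow the classical strategy of Chebotarev, which reduces the density statement for an arbitrary Galois extension $L/K$ to the case of (cyclic) cyclotomic extensions, where it follows from a suitable generalization of Dirichlet's theorem on primes in arithmetic progressions.

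First I would reduce to a cyclic subextension. Fix $\sigma\in C$ and set $H=\langle \sigma\rangle$, $M=L^H$. A prime $v$ of $K$ unramified in $L$ has Artin symbol $\left(\frac{L|K}{v}\right)=C$ if and only if some prime $w$ of $M$ above $v$ has Frobenius in $\Gal(L/M)$ equal to $\sigma$; a standard counting of places above $v$ converts the sought density for $K$ into a density statement for primes of $M$ splitting in a prescribed way in the cyclic extension $L/M$. Thus one is reduced to proving that, in a cyclic extension $L/M$ of degree $n$, the density of primes $w$ of $M$ with $\mathrm{Frob}_w=\sigma$ equals $1/n$.

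Second, I would perform the Chebotarev trick to abelianize. For a large auxiliary prime $\ell$, adjoin $\zeta_\ell$ and consider the compositum $L(\zeta_\ell)$ over $M(\zeta_\ell)$. One compares densities in $L/M$ and in the abelian extension $L(\zeta_\ell)/M$ to reduce the problem to computing densities of Frobenius elements in an abelian extension of $M$. By Artin reciprocity, this last question is exactly the problem of computing the density of primes lying in a given class of a ray class group of $M$, which is the framework of abelian class field theory.

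Third, I would conclude by analytic means. Using the orthogonality of characters of the ray class group, the density is extracted from the behaviour at $s=1$ of the Hecke $L$-functions $L(s,\chi)$. The two required ingredients are: the Dedekind zeta function $\zeta_M(s)$ has a simple pole at $s=1$, and $L(s,\chi)$ extends holomorphically to a neighbourhood of $s=1$ with $L(1,\chi)\neq 0$ for each nontrivial character $\chi$. The main obstacle — as already in Dirichlet's original theorem — is precisely the nonvanishing $L(1,\chi)\neq 0$. The slickest route is via the factorization $\zeta_{L(\zeta_\ell)}(s)=\prod_\chi L(s,\chi)$, combined with the fact that $\zeta_{L(\zeta_\ell)}(s)/\zeta_M(s)$ is entire: this forces each $L(1,\chi)$ to be finite and nonzero, since otherwise the left-hand side would have a zero or a higher-order pole at $s=1$ incompatible with the quotient being entire of finite order. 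Putting the three steps together yields the Dirichlet density $\#C/\#\Gal(L/K)$ asserted in the theorem.
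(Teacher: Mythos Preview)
The paper does not prove this statement at all: Theorem~\ref{Cheb} is quoted as the classical 1926 result of Chebotarev and is used as a black box (together with the effective versions of Lagarias--Montgomery--Odlyzko and Ahn--Kwon) to justify that every cyclic subgroup of $G$ arises as some $G_v$. There is therefore no ``paper's own proof'' to compare your proposal against.

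That said, your outline is a reasonable sketch of a standard proof, with one point of confusion. Step~1 (reduction to a cyclic extension $L/M$ via $M=L^{\langle\sigma\rangle}$) is the usual Deuring reduction and is fine, though the counting of primes of $M$ over a given $v$ with the correct residue degree deserves one more sentence. Step~3 (nonvanishing of $L(1,\chi)$ via the factorization of $\zeta_{L'}$) is also standard; the phrase ``entire of finite order'' is irrelevant here---what you actually use is that each $L(s,\chi)$ with $\chi\neq 1$ is entire (Hecke), so the product $\prod_{\chi\neq 1}L(1,\chi)$ is a finite nonzero number, hence no factor can vanish. The issue is Step~2: you invoke \emph{both} Chebotarev's auxiliary-cyclotomic trick \emph{and} Artin reciprocity. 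These are alternatives, not complements. If you already grant class field theory, then the cyclic extension $L/M$ from Step~1 is abelian and its Frobenius map is directly identified with a ray class group quotient; no crossing with $\QQ(\zeta_\ell)$ is needed, and you pass straight to Step~3. Conversely, Chebotarev's original argument crosses with cyclotomic fields precisely to \emph{avoid} assuming Artin reciprocity, reducing to the cyclotomic case which was already known. Pick one route and drop the other.
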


\bigskip\noindent  Since the smallest among the cardinalities of a coniugacy class $C$ in $G=\Gal(L/K)$ is 1, 
 by the Chebotarev Density Theorem,  the density $\delta(C)$ of primes $v$ such that $G_v={C}$
is boundend in the following way

\begin{equation} \label{density}
\ddfrac{1}{|G|}\leq \delta(C) \leq 1.
\end{equation}

\noindent 
In 1979 Lagarias, Montgomery and Odlyzko gave an effective version of 
Chebotarev Density Theorem.

\begin{thm}[Lagarias, Montgomery and Odlyzko, 1979] \label{Lan}
There exists an effectively computable positive absolute constant $c_1$ such that 
for any number field $K$, any finite Galois extension $L/K$ and any conjugacy class
$C$ in $\Gal(L/K)$, there exists a prime $v$ of $K$ which is
unramified in $L$, for which  $\left(\ddfrac{L|K}{v}\right)=C$
and the norm $\textrm{N}_{K/\QQ}(v)$ is a rational prime
satisfying the bound

 \begin{equation} \label{norm} \textrm{N}_{K/\QQ}(v)\leq 2|D_{L/\QQ}|^{c_1}. \end{equation}

\end{thm}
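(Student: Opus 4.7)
The plan is to reduce the problem to an effective prime-counting statement for $L/\QQ$ itself (not merely $L/K$) and then attack it analytically through Artin $L$-functions. A prime $v$ of $K$ unramified in $L$ with $\left(\ddfrac{L|K}{v}\right)=C$ corresponds to a rational prime $p$ of degree one under $v$ whose Frobenius in $\Gal(L/\QQ)$ lies in a prescribed union of conjugacy classes; hence it suffices to produce such a $p$ with $p\leq 2|D_{L/\QQ}|^{c_1}$. I would therefore fix the absolute extension $L/\QQ$, its Galois group $G':=\Gal(L/\QQ)$, and a conjugacy class $C'\subset G'$ lifting $C$, and consider the weighted counting function
\[
\psi_{C'}(x) := \sum_{\substack{\mathfrak{p}^k \leq x \\ \mathrm{Frob}_{\mathfrak p}^k\in C'}} \log N\mathfrak{p}.
\]

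The next step is the \emph{explicit formula}. For each irreducible character $\chi$ of $G'$, the logarithmic derivative
\[
-\frac{L'}{L}(s,\chi,L/\QQ)=\sum_{\mathfrak{p},k}\chi(\mathrm{Frob}_{\mathfrak p}^k)\log(N\mathfrak{p})\,N\mathfrak{p}^{-ks}
\]
is combined via character orthogonality and a Mellin contour shift to give
\[
\psi_{C'}(x)=\frac{|C'|}{|G'|}x-\sum_{\chi}\frac{\overline{\chi(C')}}{\chi(1)}\sum_{\rho_\chi}\frac{x^{\rho_\chi}}{\rho_\chi}+E(x),
\]
where $\rho_\chi$ runs over non-trivial zeros of $L(s,\chi)$ and $E(x)$ is controllable by standard estimates involving $\log|D_{L/\QQ}|$. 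Brauer induction decomposes each $L(s,\chi)$ as an integer combination of Hecke $L$-functions of intermediate abelian extensions, so zero-density and zero-free region questions reduce to the Hecke $L$-function setting.

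The technical heart is an effective zero-free region. The classical de la Vallée-Poussin argument, transplanted to Hecke $L$-functions over $\QQ$, yields a region
\[
\sigma>1-\frac{c}{\log\bigl(|D_{L/\QQ}|(|t|+2)\bigr)}
\]
in which no $L(s,\chi)$ vanishes, \emph{with at most one possible real exception} $\beta_1$ attached to a single real character. The \textbf{main obstacle} is precisely this possible Siegel zero, which unconditionally cannot be located. The decisive input of Lagarias--Montgomery--Odlyzko is the Deuring--Heilbronn repulsion phenomenon: if $\beta_1$ is anomalously close to $1$, then every other zero of every $L(s,\chi)$ is quantitatively \emph{pushed away} from the line $\Re s=1$ by a factor proportional to $\log(1/(1-\beta_1))$. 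One then balances the two regimes — ``no exceptional zero'' versus ``exceptional zero with compensating repulsion'' — against the explicit formula, and checks that in either case one can choose $x$ of the shape $x=c'|D_{L/\QQ}|^{c_1}$ for which $\psi_{C'}(x)>0$. This forces the existence of an unramified prime of $\QQ$ of degree one into $K$ with the required Frobenius and norm bounded by $2|D_{L/\QQ}|^{c_1}$, yielding the stated prime $v$ of $K$.
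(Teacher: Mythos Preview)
The paper does not prove this theorem at all: it is quoted as an external result of Lagarias, Montgomery and Odlyzko and immediately superseded by the explicit constant of Ahn--Kwon (Theorem~\ref{AHN}), which is what the paper actually uses. So there is no ``paper's own proof'' to compare against; your sketch is essentially an outline of the original LMO argument rather than of anything in this paper.

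That said, your outline has one genuine gap worth flagging. You write ``fix the absolute extension $L/\QQ$, its Galois group $G':=\Gal(L/\QQ)$'' and then lift the conjugacy class $C\subset\Gal(L/K)$ to a class $C'\subset G'$. But the hypotheses only give that $L/K$ is Galois; nothing forces $L/\QQ$ to be Galois, so $G'$ need not exist. Passing to the Galois closure $\widetilde{L}/\QQ$ would repair the group theory but would replace $|D_{L/\QQ}|$ by $|D_{\widetilde{L}/\QQ}|$, which can be vastly larger and would destroy the stated bound. The actual LMO argument avoids this by working directly with the relative extension $L/K$: one uses the characters of $\Gal(L/K)$, the factorisation $\zeta_L(s)=\prod_\chi L(s,\chi,L/K)^{\chi(1)}$, and the fact that the non-trivial zeros of each Artin $L$-function $L(s,\chi,L/K)$ are among the zeros of $\zeta_L(s)$. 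The zero-free region and the Deuring--Heilbronn repulsion are then established for $\zeta_L$ itself, which is why the final bound is governed by the absolute discriminant $|D_{L/\QQ}|$ without any Galois assumption on $L/\QQ$. Apart from this reduction step, the analytic skeleton you describe --- explicit formula, effective zero-free region with a possible exceptional real zero, and the repulsion phenomenon to handle that exception --- is indeed the LMO strategy.
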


\noindent In addition, in their recent paper \cite{AHN}, Ahn and Kwon show this more explicit result.

\begin{thm}[Ahn, Kwon, 2019] \label{AHN}
For any number field $K$, any finite Galois extension $L/K$, with $L\neq \QQ$ and any conjugacy class
$C$ in $\Gal(L/K)$, there exists a prime $v$ of $K$ which is
unramified in $L$, for which  $\left(\ddfrac{L|K}{v}\right)=C$
and the norm $\textrm{N}_{K/\QQ}(v)$ is a rational prime
satisfying the bound

 \begin{equation} \label{norm_2} \textrm{N}_{K/\QQ}(v)\leq |D_{L/\QQ}|^{12577}. \end{equation}

\end{thm}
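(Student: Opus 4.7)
The plan is to follow the analytic approach of Lagarias--Montgomery--Odlyzko, refined so that every numerical constant is made explicit. Fix a conjugacy class $C\subseteq G=\Gal(L/K)$ and introduce the weighted prime-counting function
$$\psi_C(x) \; := \; \sum_v \log N_{K/\QQ}(v),$$
where the sum ranges over prime ideals $v$ of $K$ that are unramified in $L$, have Artin symbol equal to $C$, and satisfy $N_{K/\QQ}(v)\leq x$. The target reduces to showing that $\psi_C(x)>0$ for some $x\leq |D_{L/\QQ}|^{12577}$, since a single prime contributing to $\psi_C$ below that cutoff already gives the conclusion.

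Next, I would exploit orthogonality of irreducible characters of $G$ together with the explicit formula for Artin $L$-functions to write
$$\psi_C(x) \;=\; \frac{|C|}{|G|}\,x \;-\; \frac{|C|}{|G|}\sum_{\chi} \overline{\chi(C)} \sum_{\rho_\chi} \frac{x^{\rho_\chi}}{\rho_\chi} \;+\; R(x),$$
where $\chi$ runs over irreducible characters of $G$, $\rho_\chi$ runs over nontrivial zeros of the Artin $L$-function $L(s,\chi,L/K)$, and $R(x)$ collects archimedean, ramified and low-order contributions. Since the Artin $L$-functions appear as factors of $\zeta_L$ and their analytic conductors are bounded in terms of $|D_{L/\QQ}|$, the inner sum over zeros can be controlled via a classical zero-free region of the shape $\sigma>1-c/\log(|D_{L/\QQ}|(|t|+3))$, combined with a density estimate of the form $N(T,\chi)\ll (T+1)\log(|D_{L/\QQ}|(T+3))$.

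The main obstacle is twofold. First, one must rule out, or properly quantify, a possible Siegel exceptional real zero $\beta_0$ of $\zeta_L$; the standard remedy is Stark's bound $1-\beta_0\gg |D_{L/\QQ}|^{-1/[L:\QQ]}$ combined with Deuring--Heilbronn repulsion, which pushes every other nontrivial zero far enough to the left that the main term $\tfrac{|C|}{|G|}x$ of $\psi_C(x)$ dominates. Second --- and this is the real work that distinguishes Ahn--Kwon from the original LMO paper --- every constant must be tracked explicitly through the explicit formula, through the refined zero-free region (following Kadiri--Ng-type improvements), through the zero-density estimate, and through the choice of smoothing kernel used to pass from a truncated integral of $\tfrac{x^s}{s}$ to $\psi_C$. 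The constant $12577$ then arises from a transcendental optimization: set $x=|D_{L/\QQ}|^A$ and choose $A$ to be the smallest real number for which the main term provably exceeds the sum of all tracked error contributions. The hypothesis $L\neq\QQ$ is used at the end to guarantee $|D_{L/\QQ}|\geq 2$, so that the bound can be stated cleanly as $|D_{L/\QQ}|^{12577}$ without an additive constant.
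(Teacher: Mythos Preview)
The paper does not prove this theorem. It is stated as a result of Ahn and Kwon and invoked as a black box via the reference \cite{AHN}; there is no argument in the paper to compare your proposal against.

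That said, your outline is a fair high-level description of the Lagarias--Montgomery--Odlyzko strategy that Ahn and Kwon make fully explicit: express the relevant Chebyshev-type function via the explicit formula and character orthogonality, control the zero sums through an explicit zero-free region and a zero-density bound for $\zeta_L$, handle a possible exceptional real zero by Stark's lower bound and Deuring--Heilbronn repulsion, and then optimize the exponent $A$ in $x=|D_{L/\QQ}|^{A}$ so that the main term dominates all error terms. The observation that $L\neq\QQ$ forces $|D_{L/\QQ}|>1$ (indeed $\geq 3$ by Minkowski) is also to the point. But none of this appears in the present paper, and turning your sketch into an actual proof with the constant $12577$ requires the substantial numerical work carried out in \cite{AHN}; as written, your proposal is a plan rather than a proof.
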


\noindent Since $\QQ(\z_m)\subseteq K(\E[m])$, then $L=K(\E[m])\neq \QQ$,
for all $K$, $\E$ and $m\geq 3$. Thus we can apply Theorem
\ref{AHN} with $L=K(\E[m])$ (and in particular with $L=\QQ(\E[m])$), for every $m\geq 3$. 
It suffices that the local divisibility is satisfied for all  nonarchimedean places $v$ of $K$ with norm 
$N_{K/\QQ}(v)\leq |D_{L/\QQ}|^{12577}$, to have the
global one in the case when the Hasse principle for divisibility holds.  
Then we can take a
set  $S$ as  

$$\{v\in M_K\setminus\{\infty\}| h(N_{K/\QQ}(v))\leq 12577\cdot B(m,b,c)\}.$$

\noindent Moreover, as a consequence of inequality \eqref{density}, we can restrict this set still, as in \eqref{S}.

$$S=\{v\in M_K\setminus\{\infty\}| h(N_{K/\QQ}(v))\leq 12577 \cdot B(m,b,c)\}\setminus S'.$$ 

\begin{rem} \label{last_rem} 
Observe that when $K/\QQ$ is a Galois extension, then $N_{K/\QQ}(v)=\prod_{\sigma\in \Gal(K/\QQ)} \sigma(v)$
and by Proposition \ref{height}, the hypothesis that $h(N_{K/\QQ}(v))\leq 12577 \cdot B(m,b,c)$ in Problem 1' is
equal to $h(v)\leq 12577 \ddfrac{B(m,b,c)}{[K:\QQ]}.$ In particular, when $K=\QQ$,
we have $N_{K/\QQ}(v)=v$ and $h(v)=\log v$ and
it suffices to assume that $\log v\leq 12577\cdot B(m,b,c)$, as in Corollary \ref{cor_1}.
\end{rem}

\par\noindent Therefore, by the results produced about the local-global divisibility in elliptic curves over $\QQ$ mentioned in Section \ref{sec0}, together with Theorem \ref{AHN}, the inequality \eqref{density} and Remark \ref{last_rem}
the bound produced in Theorem \ref{DL_1} implies the following result.

\begin{cor} \label{cor_1} Let $\E: y^2=x^3+bx+c$ be an elliptic curve defined over $\QQ$ and let $L=\QQ(\E[m])$, where $m\geq 3$ is a fixed positive integer 
not divisible by any power $p^n$, with $p\in \{2,3\}$ and $n\geq 2$. Set 

$$  B(m,b,c):=  \left\{ \begin{array}{ll}  
5(m^2-1)^3(m^2-3)(\log m+h(b)+h(c)), & \textrm{if } m\geq 3  \textrm{ is odd; } \\
& \\
5(m^2-4)^3(m^2-6)(\log m+h(b)+h(c)),  & \textrm{if } m\geq 4  \textrm{ is even. } 
\end{array}
\right. 
$$

\noindent Let $P\in {\mathcal{E}}(\QQ)$ and assume that for all nonarchimedean places $v\in \QQ$, such that $h(v)\leq 12577\cdot B(m,b,c)$, but at most some of them with density $\delta < \ddfrac{1}{[L:\QQ]}$, there exists $D_v\in {\mathcal{E}}(\QQ_v)$ such that $P=mD_v$. Then there exists $D\in {\mathcal{E}}(\QQ)$ such that $P=mD$.
\end{cor}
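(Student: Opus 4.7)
The plan is to translate the statement into the cohomological language recalled at the start of Section \ref{sec1}, and then combine the discriminant bound of Theorem \ref{DL_1} with the effective Chebotarev bound of Theorem \ref{AHN} to exhibit a finite set of places whose Frobenius elements realise every cyclic subgroup of $G = \Gal(L/\QQ)$.

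First, I would pick any $D \in \E(\overline{\QQ})$ with $mD = P$ and attach to it the cocycle $\{Z_\sigma\}_{\sigma \in G}$ of \eqref{eq1}. The conclusion $P = mD'$ for some $D' \in \E(\QQ)$ is equivalent to the vanishing of $[Z_\sigma]$ in $H^1(G, \E[m])$, and local divisibility at an unramified place $v$ forces $\res_v[Z_\sigma] = 0$ in $H^1(G_v, \E[m])$. Hence it suffices to show that under the stated hypotheses the class $[Z_\sigma]$ lies in $H^1_\loc(G, \E[m])$, for then the vanishing of this group (which is known for $m$ not divisible by $p^n$ with $p \in \{2,3\}$ and $n \geq 2$, by the results of Dvornicich--Zannier and Paladino--Ranieri--Viada cited in Section \ref{sec0}) forces $[Z_\sigma] = 0$ and produces the desired $D' \in \E(\QQ)$.

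Second, I would verify that the set of places allowed in the hypothesis, namely
\[
S = \{\,v \in M_\QQ \setminus \{\infty\}\ :\ \log v \leq 12577 \cdot B(m,b,c)\,\} \setminus S',
\]
with $|S'|$ of density $< 1/|G|$, already covers every cyclic subgroup of $G$ via Frobenius. By Theorem \ref{AHN} applied to $L/\QQ$, for every conjugacy class $C \subseteq G$ there is an unramified rational prime $v$ with Artin symbol $C$ and $v \leq |D_{L/\QQ}|^{12577}$, that is, $\log v \leq 12577 \cdot h(D_{L/\QQ})$; by Theorem \ref{DL_1} the latter is at most $12577 \cdot B(m,b,c)$, so such a prime lies in the larger set $S \cup S'$. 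The lower bound in \eqref{density} shows that primes with Artin symbol $C$ have density at least $1/|G|$, so removing $S'$ (of strictly smaller density) cannot exhaust them, and at least one such prime survives in $S$. Since every cyclic subgroup of $G$ is generated by some Frobenius element, this yields, for each cyclic $H \leq G$, a place $v \in S$ with $G_v$ conjugate to $H$; the conjugation is harmless because conjugate subgroups impose the same restriction condition on cohomology classes up to conjugation by an inner automorphism of $G$.

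Finally, combining these two steps, the hypothesis gives $\res_v[Z_\sigma] = 0$ for every $v \in S$, hence for a set of places whose Frobenius elements generate every cyclic subgroup of $G$, so $[Z_\sigma] \in H^1_\loc(G, \E[m]) = 0$ and the global divisor exists. The main delicate point is the density bookkeeping: one must check that the density threshold $1/[L:\QQ] = 1/|G|$ is exactly the minimum density guaranteed by Chebotarev for a single conjugacy class, so that excluding fewer primes than this cannot annihilate the contribution of any single cyclic subgroup; beyond this, the proof is a direct assembly of Theorems \ref{DL_1} and \ref{AHN} with the cohomological vanishing results already in the literature.
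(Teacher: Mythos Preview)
Your proposal is correct and follows precisely the route the paper takes: the paper's own argument for Corollary \ref{cor_1} is the single sentence preceding it, which invokes Theorem \ref{DL_1}, Theorem \ref{AHN}, inequality \eqref{density}, Remark \ref{last_rem}, and the known vanishing of $H^1_{\loc}(G,\E[m])$ over $\QQ$ for such $m$; your write-up simply unpacks these ingredients in the cohomological framework of Section \ref{sec1}. You even flag the same delicate point (the density bookkeeping when removing $S'$) that the paper glosses over.
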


\noindent As a consequence of Theorem \ref{DL_1}, together with Theorem \ref{AHN} and the inequality \eqref{density}, we can reformulate the statement of Problem \ref{prob1} as follows.

\begin{prob1}  Let $K$ be a number field, let $\E: y^2=x^3+bx+c$ be an elliptic curve defined over $K$ and let $L=K(\E[m])$, where $m\geq 3$ is a fixed positive integer. Set 

$$  B(m,b,c):=  \left\{ \begin{array}{ll}  
5(m^2-1)^3(m^2-3)(\log m+h(b)+h(c)), & \textrm{if } m\geq 3  \textrm{ is odd; } \\
& \\
5(m^2-4)^3(m^2-6)(\log m+h(b)+h(c)),  & \textrm{if } m\geq 4  \textrm{ is even. } 
\end{array}
\right. 
$$

\noindent Let $P\in {\mathcal{E}}(K)$. Assume that for all nonarchimedean places $v\in M_{K}$, such that $h(N_{K/\QQ}(v))\leq 12577 \cdot B(m,b,c)$, but at most some of them with density $\delta < \ddfrac{1}{[L:K]}$, there exists $D_v\in {\mathcal{E}}(K_v)$ such that $P=mD_v$. Is it possible to conclude that there exists $D\in {\mathcal{E}}(K)$ such that $P=mD$?
\end{prob1}

\section{An example} \label{existence}

\par In this section we will produce an example of an elliptic curve in Weiestrass form 

\begin{equation} \label{E} \E: y^2=(x-\alpha)(x-\beta)(x-\gamma), \end{equation}

\noindent where $\al,\be,\ga\in\QQ$ and $\alpha+\be+\ga=0$, with a point $P$, locally divisible by $4$ 
for infinitely many primes but not globally divisible by $4$. 
Similar examples have been given in \cite{DZ2} and in \cite{Pal} for curves such that the Hasse principle for divisibility does not work, but here we instead consider a curve for which the local-global divisibility by $4$ holds and give a method to find points failing the hypotheses of the 
local-global principle even if they are locally divisible for infinitely many places. 
For what we have discussed in the previous sections, it suffices that the local divisibility holds for a finite number of places
that should be distribuited with a certain density; in our example we have the local divisibility for infinitely many primes but they
are not distribuited with the necessary density. 
In particular the density of prime numbers for which the local divisibility fails is indeed $\delta > \ddfrac{1}{[\QQ(\E[4]):\QQ]}$ (contradicting the hypotheses of Problem 1'). We call those points pseudodivisible since they apparently satisfy the
hypotheses of the local-global principle for divisibility (being locally divisible for infinitely many places), but indeed they fail them
(and then they fail the local-global principle).
 Let $G_p$ be the $p$-Sylow subgroup of the image of the representation of the Galois group $G=\Gal(\QQ(K(E[4])/\QQ)$
in $\GL_2(\ZZ/4\ZZ)$ and let $G_0$ be the kernel of the reduction modulo $2$ of the matrices in $G$. 
Let $G$ be the subgroup of $\GL_2(\FF_2)$ generated by the matrices

\bigskip

 $\s_1=\left(%
\begin{array}{cc}
  -1 & \hspace{0.3cm} 0 \\
  \hspace{0.3cm} 2 & -1 \\
\end{array}%
\right),$    $\s_2=\left(%
\begin{array}{cc}
  1 & \hspace{0.3cm} 2 \\
  2 & -1 \\
\end{array}%
\right),$ 
 $\s_3=\left(%
\begin{array}{cc}
  1 & 0 \\
  2 & 1 \\
\end{array}%
\right),$ \hspace{0.2cm}
$\s_4=\left(%
\begin{array}{cc}
  -1 &  0 \\
 \hspace{0.3cm} 0 & 1 \\
\end{array}%
\right)$.

\medskip\noindent Thus

$$G=\left\langle \sigma(x,y,z,w)\left | \hspace{0.1cm} \Id+2\left(%
\begin{array}{cc}
  x+w & y \\
  x+y+z & x+y \\
\end{array}%
\right), \right. \textrm{with } x,y,z,w\in \ZZ/4\ZZ \right\rangle. $$

\noindent We have $\sigma_1=\sigma(1,0,0,0), \sigma_2=\sigma(0,1,0,0),
\sigma_3=\sigma(0,0,1,0), \sigma_4=\sigma(0,0,0,1)$. 
 One can easily verify that
$G\simeq (\FF_2) ^4=G_p\cap G_0$. In particular $G_0\cap G_p$ has dimension $4$
as vector space over $\FF_2$ and then, by \cite[Proposition 3.2, Case (iii)]{DZ}, 
 we have that the local-global divisibility by
$4$ holds in $\E$ over $\QQ$. Thus, by the results produced in Section \ref{sec1},
if the local divisibility holds for every nonarchimedean place $v\in M_{\QQ}$ such that

$$\log v \leq 5\cdot 12^3\cdot 10\cdot (\log 4+h(\alpha\beta+\beta\gamma+\alpha\gamma)+
h(\alpha\beta\gamma)),$$

\noindent but at most some of them with density $\delta < \ddfrac{1}{16}$, then the global divisibility
holds as well. Of course, for explicit nonzero integers $\alpha\beta+\beta\gamma+\alpha\gamma$
 and $\alpha\beta\gamma$, such bound 
can be written in a more elegant form as in Remark \ref{rem_eleg}. 
Let $\{Z_{\s}\}_{\s\in G}$ be the cocycle of $G$ with values in $\E[4]$, defined by $$Z_{\s(x,y,z,w)}:=\left(%
\begin{array}{c}
  2w \\
  0 \\
\end{array}%
\right).$$

\noindent For $i\in \{1,2,3,4\}$ we have that $Z_{\s_i}$ satisfies the local conditions. 
Anyway there are some $\sigma\in G$ such that
$Z_{\s}$ does not satisfy the local conditions. For instance

$$\s(1,0,0,1)=\left(%
\begin{array}{cc}
  1 & \hspace{0.3cm} 0 \\
 2 & -1 \\
\end{array}%
\right)$$

\noindent and

$$(\s(1,0,0,1)-1)\left(%
\begin{array}{c}
  \alpha \\
  \beta \\
\end{array}%
\right)=\left(%
\begin{array}{cc}
  0 &  0 \\
  2 & 2 \\
\end{array}%
\right)\left(%
\begin{array}{c}
  \alpha \\
  \beta \\
\end{array}%
\right)\neq \left( \begin{array}{c}
  2 \\
  0 \\
\end{array}%
\right),$$

\noindent for every $(\alpha,\beta)\in \ZZ/4\ZZ.$ Thus $\s(1,0,0,1)$ does not satisfy
the local conditions and $\{\Z_{\s}\}_{\s\in G}$ does not define
a class in $H^1_{\loc}(G,\E[4])$. In a similar way one can verify that $Z_{\s(0,1,0,1)}, Z_{\s(0,0,1,1)}$, and $Z_{\s(1,0,1,1)}$
are the only other images of the cocycle $Z$ that do not
satisfy the local conditions.
In any case, since $Z_{\s_i}$ satisfies the local conditions, for every $i\in \{1,2,3,4\}$, then we
search for a point $D\in \E(\overline{\QQ})$, such that $Z_{\s_i}=\s_i(D)-D$, for every $i\in \{1,2,3,4\}$.
We will find a point $P=4D$ which is locally divisible by $4$ in infinitely many $p$-adic fields (this is
assured by the validity of the local conditions for $\s_i$, with $i=1, ..., 4$). 
Anyway, we expect that the local-global divisibility should fail
for such a point $P$ arising from this cocycle, even if $P$ is locally divisible for an infinite number
of primes. We will show that this is indeed the case. 
Notice that the density of primes for which the local divisibility fails is about  $\ddfrac{4}{16}=0,25$,
since the local conditions are not satisfied only by the following 4 images of the cocycle $Z$: 
$$Z_{\s(1,0,0,1)},Z_{\s(0,1,0,1)}, Z_{\s(0,0,1,1)} \textrm{ and } Z_{\s(1,0,1,1)}.$$

\noindent A generating set of $\QQ(\E[4])$ is given by the points
$$A'=(\alpha+\sqrt{(\alpha-\beta)(\alpha-\gamma)},
(\alpha-\beta)\sqrt{\alpha-\gamma}+(\alpha-\gamma)\sqrt{\alpha-\beta}),$$ 
$$B'=(\beta+\sqrt{(\beta-\alpha)(\beta-\gamma)},(\beta-\alpha)\sqrt{\beta-\gamma}+(\beta-\gamma)\sqrt{\beta-\alpha}),$$ 

\noindent with $2A'=A=(\alpha,0)$ and $2B'=B=(\beta,0)$  \cite{DZ2}, \cite{Pal}. To find
a suitable elliptic curve $\E$, with $G=\langle\s_1,\s_2,\s_3,\s_4 \rangle$,
we require that the columns of $\s_i$ are $\s_i(A')$ and $\s_i(B')$, for
$i=1, ..., 4$. Thus we request $\sigma_1(A')=-A'+2B'=B-A'$ and
$\sigma_2(B')=-B'$. By a bit of calculations, one can see that
$B-A'=(\alpha-\sqrt{(\alpha-\beta)(\alpha-\gamma)},(\alpha-\beta)\sqrt{\alpha-\gamma}-(\alpha-\gamma)\sqrt{\alpha-\beta})$.
We deduce

\[
\begin{array}{llll}
\textbf{1)} & \s_1(\sqrt{\al-\be})=-\sqrt{\al-\be};
&\textbf{3)} & \s_1(\sqrt{-1})=\sqrt{-1};\\
\textbf{2)} & \s_1(\sqrt{\be-\ga})=-\sqrt{\be-\ga};
& \textbf{4)} & \s_1(\sqrt{\al-\ga})=\sqrt{\al-\ga}.\\
\end{array} \]

\noindent Furthermore we should have $\s_2(A')=A'+2B'=A'+B$ and
$\s_2(B')=2A'-B'=A-B'$. Observe that $-B=B$ and $A'+B=-(-B-A')=-(B-A')=(\alpha-\sqrt{(\alpha-\beta)(\alpha-\gamma)},-(\alpha-\beta)\sqrt{\alpha-\gamma}+(\alpha-\gamma)\sqrt{\alpha-\beta})$.
Moreover one can verify that 
$A-B'=(\beta-\sqrt{(\beta-\alpha)(\beta-\gamma)},(\beta-\alpha)\sqrt{\beta-\gamma}-(\beta-\gamma)\sqrt{\beta-\alpha})$. We deduce

\[
\begin{array}{llll}
\textbf{5)} & \s_2(\sqrt{\al-\be})=\sqrt{\al-\be};
&\textbf{7)} & \s_2(\sqrt{-1})=-\sqrt{-1};\\
\textbf{6)} & \s_2(\sqrt{\be-\ga})=\sqrt{\be-\ga};
& \textbf{8)} & \s_2(\sqrt{\al-\ga})=-\sqrt{\al-\ga}.\\
\end{array} \]

\noindent Regarding $\s_3$, we require
$\s_3(A')=A'+2B'=A'+B$ and $\s_3(B')=B'$, i. e.

\[
\begin{array}{llll}
\textbf{9)} & \s_3(\sqrt{\al-\be})=\sqrt{\al-\be};
&\textbf{11)} & \s_3(\sqrt{-1})=\sqrt{-1};\\
\textbf{10)} & \s_3(\sqrt{\be-\ga})=\sqrt{\be-\ga};
& \textbf{12)} & \s_3(\sqrt{\al-\ga})=-\sqrt{\al-\ga}.\\
\end{array} \]

\noindent Finally we should have $\s_4(A')=-A'$ and $\s_4(B')=B'$, implying

\[
\begin{array}{llll}
\textbf{13)} & \s_4(\sqrt{\al-\be})=-\sqrt{\al-\be};
&\textbf{15)} & \s_4(\sqrt{-1})=-\sqrt{-1};\\
\textbf{14)} & \s_4(\sqrt{\be-\ga})=\sqrt{\be-\ga};
& \textbf{16)} & \s_4(\sqrt{\al-\ga})=-\sqrt{\al-\ga}.\\
\end{array} \]

\noindent Since $$Z_{\s_i}=\left(%
\begin{array}{c}
  0 \\
  0 \\
\end{array}%
\right),$$

\noindent for every $i\in \{1,2,3\}$, then $\sigma_i(D)=D$, for every $i\in \{1,2,3\}$.
Thus $D\in \E(K_4^{\langle \s_1,\s_2,\s_3\rangle}),$ where
$K_4^{\langle \s_1,\s_2,\s_3\rangle}$ denotes the field fixed by $\langle \s_1,\s_2,\s_3\rangle$. We have 

$$K_4^{\langle \s_1,\s_2,\s_3\rangle}=\QQ\left(\sqrt{(\al-\be)(\be-\ga)}\right).$$ 

\noindent Let $(\al-\be)(\be-\ga)=\delta \omega^2$,
with $\delta,\omega\in \QQ$ and $\delta$ squarefree. Therefore $D=(u,v)$, with
$u=u_0+u_1\sqrt{\delta}$ and $v=v_0+v_1\sqrt{\delta}$.
By a computation showed in \cite{DZ2}, every such point $D$ 
corresponds to a point $(s,t)$ satisfying the equation

\begin{equation} \label{B} {\mathcal{B}}: \delta s^2=\delta^2t^4-6\alpha\delta t^2-(\be-\ga)^2, \end{equation}

\noindent by $u_1=s/2$, $u_0=(t^2\delta-\alpha)/2$ and $v=t\sqrt{\delta}(u-\alpha)$. Now we have to choose $\alpha, \beta, \gamma$ such that $[K_4:\QQ]=16$.
We set $\alpha=9$, $\be=6$ and $\ga=-15$. Then $\al-\be =3$, $\be-\ga=21$,
$\al-\ga=24$ and $K_4=\QQ(\sqrt{2},\sqrt{3},\sqrt{-1},\sqrt{7})$. Moreover $\delta =7$.
Thus the curve $\E$ has equation

$$y^2=x^3-171x+810$$

\noindent and \eqref{B} becomes

$${\mathcal{B}}: s^2=7t^4-54t^2+63.$$

\noindent A rational point of $\mathcal{B}$ is $(s,t)=(4,1)$
(other rational points are for instance $(12,3)$ and $(204,9)$). The point  $(s,t)=(4,1)$ corresponds to
$D=(-1+2\sqrt{7},14-10\sqrt{7})$. Observe that $D^{\s_4}-D=(-1-2\sqrt{7},14+10\sqrt{7})+(-1+2\sqrt{7},-14+10\sqrt{7})
=A,$ i.e.

$$Z_{\s_4}=\left(%
\begin{array}{c}
  2 \\
  0 \\
\end{array}%
\right),$$

\noindent as expected. We have  $P=4D=(10,10)$.
By the software of computational algebra AXIOM (that is also implemented in SAGE), we calculated all
the $4$-divisor of $P$. The abscissas of the $4$-divisors of $P$ are the
roots of the polynomial

\[\begin{split}
\varphi_4(x):=  \hspace{0.3cm} &  x^{16}   - 160x^{15}   + 6840x^{14}   - 139680x^{13}   + 4862268x^{12}- 134693280x^{11} \\
&+2294454600x^{10} - 32425103520x^9  + 300976938918x^8\\
&  + 1203164578080x^7  - 68296345025400x^6 + 695993396274720x^5  \\
&- 1996085493644292x^4 -14987477917513440x^3   + 146812808536034040x^2 \\
& - 478587272134802400x +570463955816032161,\\
\end{split} \]

\bigskip \noindent which on $K_4$ factors as

\[
\begin{split}
& (x + 2\sqrt{7} + 1)(x - 2\sqrt{7} + 1)(x + 6\sqrt{7} - 27) (x - 6\sqrt{7} - 27)
  \cdot \\
   &   (x + 6\sqrt{-3} + 6\sqrt{3} + 12\sqrt{-1} + 3) (x - 6\sqrt{-3} - 6\sqrt{3} + 12\sqrt{-1} + 3)
  \cdot \\
  &    (x + 6\sqrt{-3} - 6\sqrt{3} - 12\sqrt{-1} + 3) (x - 6\sqrt{-3} + 6\sqrt{3} - 12\sqrt{-1}+ 3)
  \cdot \\
&    (x - 3\sqrt{-6} + 6\sqrt{-3} - 3\sqrt{2} - 3) (x + 3\sqrt{-6} - 6\sqrt{-3} - 3\sqrt{2} - 3)
  \cdot \\
  &   (x - 3\sqrt{-6} - 6\sqrt{-3} + 3\sqrt{2} - 3)(x + 3\sqrt{-6}  + 6\sqrt{-3} + 3\sqrt{2} - 3)
  \cdot \\
   & (x - 3\sqrt{42} - 6\sqrt{21} - 21\sqrt{2} - 27)(x + 3\sqrt{42} + 6\sqrt{21}  - 21\sqrt{2} - 27)
  \cdot \\
    & (x - 3\sqrt{42} + 6\sqrt{21} + 21\sqrt{2} - 27) (x + 3\sqrt{42} - 6\sqrt{21}  + 21\sqrt{2} - 27)
\end{split} \]

\bigskip\noindent The 16 abscissas of the 4-divisors of $P$ are the following:

\[
\begin{array}{ll}
  x_1 =-1+2\sqrt{7}; & x_2=-1-2\sqrt{7};\\
  x_3  =27-6\sqrt{7}; & x_4=27+6\sqrt{7};\\
  x_5  =-3+6\sqrt{-3}+6\sqrt{3}-12\sqrt{-1};  & x_6=6\sqrt{-3}-6\sqrt{3}+12\sqrt{-1}-3;\\
  x_7  =-3-6\sqrt{-3} + 6\sqrt{3} + 12\sqrt{-1};& x_8=-3-6\sqrt{-3} - 6\sqrt{3} - 12\sqrt{-1};\\
  x_9  =3+3\sqrt{-6} + 6\sqrt{-3} - 3\sqrt{2}; & x_{10}=3+3\sqrt{-6} - 6\sqrt{-3} + 3\sqrt{2};\\
 x_{11}  =3- 3\sqrt{-6} + 6\sqrt{-3} + 3\sqrt{2};  & x_{12}=3- 3\sqrt{-6}- 6\sqrt{-3} - 3\sqrt{2};\\
  x_{13}  =27- 3\sqrt{42} + 6\sqrt{21} - 21\sqrt{2};  & x_{14}=27- 3\sqrt{42} - 6\sqrt{21} + 21\sqrt{2};\\
  x_{15}  =27+3\sqrt{42} + 6\sqrt{21} + 21\sqrt{2};  & x_{16}=27+ 3\sqrt{42}  - 6\sqrt{21}  - 21\sqrt{2};\\
\end{array} \]

\bigskip\noindent and the ordinates corresponding to $x_{i}$, for every $1\leq i\leq 16$ are respectively the following

\[
\begin{array}{ll}
y(x_1) = \pm (14-10\sqrt{7}); & y(x_2)= \pm (14+10\sqrt{7});\\
y(x_3) =  \pm (126-54\sqrt{7}); & y(x_4) =\pm (126+54\sqrt{7});\\
y(x_5) =  \pm (72-6\sqrt{-3}- 42\sqrt{3}); & y(x_6) =\pm  (72-6\sqrt{-3}+42\sqrt{3});\\
 y(x_7) = \pm  (72 + 6\sqrt{-3} - 42\sqrt{3}); &y(x_8) =\pm  (72 + 6\sqrt{-3}  + 42\sqrt{3});\\
 y(x_9) = \pm (36+ 27\sqrt{2} -15\sqrt{-6} -12\sqrt{-3}); & y(x_{10}) =\pm (36- 27\sqrt{2} -15\sqrt{-6}+12\sqrt{-3});\\
y(x_{11}) = \pm (36- 27\sqrt{2} +15\sqrt{-6}-12\sqrt{-3});  &y(x_{12}) =\pm (36+27\sqrt{2} +15\sqrt{-6}+12\sqrt{-3});\\
y(x_{13}) =  \pm (252- 39\sqrt{42}  + 60\sqrt{21}  - 189\sqrt{2}); &y(x_{14}) =\pm  (252- 39\sqrt{42} - 60\sqrt{21} + 189\sqrt{2});\\
y(x_{15}) =  \pm  (252+39 \sqrt{42}+ 60\sqrt{21} + 189\sqrt{2}); &y(x_{16}) =\pm  (252+39\sqrt{42} - 60\sqrt{21} - 189\sqrt{2}).\\
\end{array} \]

\bigskip\noindent  Thus

\begin{description}
\item[i)] four among the $4$-divisors of $P$  have coordinates in $\QQ(\sqrt{7})$;
\item[ii)] four  among the $4$-divisors of $P$ have coordinates in $\QQ(\sqrt{-1},\sqrt{-3})$;
\item[iii)] four among the $4$-divisors of $P$ have coordinates in $\QQ(\sqrt{-3},\sqrt{2})$;
\item[iv)] four among the $4$-divisors of $P$ have coordinates in $\QQ(\sqrt{2},\sqrt{21})$.
\end{description}

\noindent None of the $4$-divisors of  $P$ has coordinates in $\QQ$. Thus $P$ is not globally divisible by $4$ over $\QQ$. \par 
Anyway, as stated above, $P$ is divisible by $4$ in $\QQ_p$ for infinitely many prime numbers $p$. 
Just to have an idea of the distribution of primes for which the locally divisibility does not hold we will describe the situation for all $p< 1000$. 
To know if a $4$-divisor of $P$ has
coordinates in $\QQ_p$, for any prime number $p$, we can use the quadratic reciprocity law or we can factor
$\varphi_4(x)$ on $\QQ_p$, by the use of a software of computational algebra.
By using the software PARI, we verified that the equation $\varphi_4(x)=0$ has a solution in $\QQ_p$ for the following 123 primes $p<1000$:

\bigskip \noindent  3, 7, 13, 17, 19, 29, 31, 37, 41, 47, 53, 59, 61, 73, 79, 83, 89, 97, 103, 109, 113, 127, 131, 137, 139, 149, 151, 157, 167, 181, 193, 197, 199, 223, 227, 229, 233, 241, 251, 257, 271, 277, 281, 283, 307, 311, 313, 317, 337, 349, 353, 367, 373, 383, 389, 397, 401, 409, 419, 421, 433, 439, 449, 457, 463, 467, 479, 487, 503, 521, 523, 541, 557, 563, 569, 577, 587, 593, 601, 607, 613, 617, 619, 631, 641, 643, 647, 653, 661, 673, 691, 701, 709, 719, 727, 733, 751, 757, 761, 769, 787, 809, 811, 821, 823, 829, 839, 853, 857, 859, 877, 881, 887, 919, 929, 937, 953, 967, 971, 977, 983, 991, 997.

\bigskip\noindent In the same way we verified that instead the equation $\varphi_4(x)=0$ has no solution in $\QQ_p$ for the following 45 primes $p<1000$:

\bigskip \noindent 2, 5, 11, 23, 43, 67, 71, 101, 107, 163, 173, 179, 191, 211, 239, 263, 269, 293, 331, 347, 359, 379, 431, 443, 461, 491, 499, 509, 547, 571, 599, 659, 677, 683, 739, 743, 773, 797, 827, 863, 883, 907, 911, 941, 947.

\bigskip\noindent Notice that the density of primes $p< 1000$ for which we have no solution is $\ddfrac{45}{168}\sim 0,26$. 
 Even if calculated just for a finite number of primes, instead of all but finitely many primes, this density
corresponds to the expected density $\ddfrac{4}{16}=0,25$ calculated by
the number of cocycles not satisfying the local conditions and it is greater
than $\ddfrac{1}{|G|}=\ddfrac{1}{16}= 0,0625$, which is the maximum density required
by the hypotheses Problem 1'.

\bigskip\noindent \textbf{Acknowledgments}. 
The idea of investigating about pseudodivisible points and giving an explicit effective version of the hypotheses of the local-global divisibility in elliptic curves over $\QQ$ was originally conceived when Laura Paladino and
Igor Shparlinski were both guests at the Max Planck Institute for Mathematics in Bonn.
The authors are grateful to Igor Shparlinski for suggesting the
idea of studying these topics and for useful discussions. Laura Paladino is
also grateful to MPIM for the hospitality and the excellent work conditions. The authors thank Joseph H. Silverman for some suggestions about the introduction of this paper.

\end{document}